\newcommand{\field}[1]{\mathbb{#1}}
\newcommand{\Z}{\field{Z}}
\newcommand{\cA}{{\cal A}}
\newcommand{\cB}{{\cal B}}
\newcommand{\cC}{{\cal C}}
\newcommand{\cD}{{\cal D}}
\newcommand{\cS}{{\cal S}}
\newcommand{\cT}{{\cal T}}
\newcommand{\cP}{{\cal P}}
\newcommand{\cR}{{\cal R}}
\newcommand{\cN}{{\cal N}}
\newtheorem{definition}{Definition}
\newtheorem{construction}{Construction}
\newtheorem{theorem}{Theorem}
\newtheorem{lemma}{Lemma}
\newtheorem{remark}{Remark}
\newtheorem{corollary}{Corollary}
\newtheorem{example}{Example}
\newtheorem{proposition}{Proposition}
\begin{document}

\bibliographystyle{plain}

\title{
\begin{center}
New Nonuniform Group Divisible Designs \\
and Mixed Steiner Systems
\end{center}
}

\author{
{\sc Tuvi Etzion}\thanks{Department of Computer Science, Technion,
Haifa 3200003, Israel, e-mail:{\tt etzion@cs.technion.ac.il}.} \and
{\sc Yuli Tan}\thanks{School of Mathematics and Statistics,
Beijing Jiaotong University, Beijing, China,
e-mail:{\tt yl.tan@bjtu.edu.cn}.} \and
{\sc Junling Zhou}\thanks{School of Mathematics and Statistics,
Beijing Jiaotong University, Beijing, China,
e-mail:{\tt jlzhou@bjtu.edu.cn}.}}

\maketitle

\begin{abstract}
This paper considers two closely related concepts,
mixed Steiner system and nonuniform group divisible design (GDD).
The distinction between the two concepts is the minimum Hamming distance, which is required for mixed Steiner systems
but not required for nonuniform group divisible $t$-designs. In other words, it means that every mixed Steiner system is a nonuniform
GDD, but the converse is not true.
A new construction for mixed Steiner systems based on orthogonal arrays and resolvable Steiner systems is presented.
Some of the new mixed Steiner systems (also GDDs) depend on the existence of  Mersenne primes or Fermat primes.
New parameters of nonuniform GDDs derived from large sets
of H-designs (which are generalizations of GDDs) are presented, and in particular,
many nonuniform group divisible $t$-designs with $t > 3$ are introduced (for which only one family was known before).
Some GDDs are with $t > 4$, parameters for which no such design was known before.
\end{abstract}

\vspace{0.5cm}



\newpage
\section{Introduction}
\label{sec:introduction}

A {\bf \emph{Steiner system of order $n$}}, S$(t,k,n)$, is a pair $(\cN,B)$, where $\cN$ is an $n$-set
(whose elements are called \emph{points}) and $B$ is a collection
of $k$-subsets (called \emph{blocks}) of~$\cN$, such that each $t$-subset of~$\cN$ is contained
in exactly one block of $B$. A Steiner system can be represented by a binary code~$\cC$ whose codewords have length $n$ and
weight $k$. For each word $x$ of length $n$ and weight~$t$, there is exactly one codeword $c \in \cC$ for
which $d(x,c)=k-t$, where $d(y,z)$ is the Hamming distance between the words $y$ and $z$.
As a code an S$(t,k,n)$ has minimum Hamming distance $2(k-t)+2$.

There are several generalization of Steiner systems to other $t$-designs.
Group divisible designs~\cite{Han75}, H-designs were defined in~\cite{Han63},
but got the name of H-designs in~\cite{Mil74}, generalized Steiner systems~\cite{Etz97},
nonuniform group divisible designs~\cite{LKRS}, and mixed Steiner systems~\cite{Etz22,Etz25}.
All these concepts are based on set of points and blocks,
where each subset of $t$ points (subject to a certain requirements) is contained in exactly one block.
Hence, all such structures are members of the family of $t$-designs.
Some of these structures demands only containment properties, but some (generalized Steiner systems and mixed Steiner systems)
requires some minimum distance.

The goal of this paper is to consider nonuniform group divisible designs and mixed Steiner systems, two structures which differ only
in the requirement of minimum Hamming distance for mixed Steiner systems which is not required for nonuniform group divisible designs.
The rest of the paper is organized as follows. Section~\ref{sec:preliminaries} presents basic definitions and results.
In Section~\ref{sec:OA_pairs} a basic construction based only on orthogonal arrays is presented.
In Section~\ref{sec:GDDfromLS} new parameters of nonuniform group divisible designs are presented. All these $t$-designs are with $t \geq 4$,
parameters for which very few designs were constructed before and $t > 4$, parameters for which no design was constructed before.
Mixed Steiner systems with the same parameters do not exist.
Section~\ref{sec:largesets} presents a new construction for mixed Steiner systems (which is also a nonuniform GDD). The construction is based of resolution
of Steiner systems and it is best applied when there exists a Mersenne prime or a Fermat prime.

\section{Preliminaries}
\label{sec:preliminaries}

This section introduces several types of designs and codes which are important and used during our exposition.
The most important ones are the mixed Steiner systems and the group divisible designs which are the topic of this work.

\begin{definition}
\label{def:mixed}
A {\bf \emph{mixed Steiner system}} \textup{MS}$(t,k,Q)$, over the mixed alphabet $Q=\Z_{q_1} \times \Z_{q_2} \times \cdots \times \Z_{q_n}$,
is a pair $(Q,\cC)$, where $\cC$ is a set of codewords (blocks) of weight $k$, over $Q$. For each word $x$ of weight $t$ over $Q$, there
exists exactly one codeword $c \in \cC$, such that $c$ covers $x$, i.e., $d(x,c)=k-t$, and
the minimum Hamming distance of the code $\cC$ is $2(k-t)+1$. If all the $q_i$'s are equal, then the design is
a Steiner system when all the $q_i$'s equal 2 and a generalized Steiner system, whose definition follows,
when all the $q_i$'s are equal and greater than 2.
\hfill\quad $\blacksquare$
\end{definition}

If there were no requirements on the minimum distance for the code, then the structure of definition~\ref{def:mixed} is a \emph{group divisible design}.
More formally a \emph{group divisible design} GDD$(t,k,\sum_{i=1}^r n_i g_i)$ is a triple $(X,G,B)$,
where $t \geq 2$, $X$ is a set of $m=\sum_{i=1}^r n_i g_i$ points, and $G$ is a set of $n$ groups,
$n = \sum_{i=1}^r n_i$ and there are $n_i$ groups with $g_i$ points, $1 \leq i \leq r$.
Finally, $B$ is a set of blocks. Each block is a $k$-subset, such that each block contains at most one point from each group
and each $t$-subset which contains at most one point from each group is contained in exactly one block of $B$.
Such a system is a GDD of type $g_1^{n_1} g_2^{n_2} ~ \cdots ~ g_r^{n_r}$.

If all the groups in the GDD are of the same size, then the GDD is uniform and if there are at least two groups of different size, then
the GDD is nonuniform. In the original definition of a GDD all the blocks had the same size three, i,e, $k=3$, and $t=2$.
The definition later was generalized to larger $k$ and later also for $t > 2$. The design was called H-design
since the groups were considered as holes (to indicate that there is no block containing two points of the same group).
A \emph{H-design} H$(n,g,k,t)$ is a GDD$(t,k,ng)$ of type $g^n$.

The two structures of mixed Steiner systems and (nonuniform) group divisible designs are closely related. Each mixed Steiner system is
also a group divisible design, but the inverse is not correct. A mixed Steiner system has a minimum Hamming distance requirement
which is not required for a group divisible design.

A different generalization is for generalized Steiner systems.
\begin{definition}
A {\bf \emph{generalized Steiner system}} \textup{GS}$(t,k,n,q)$ is a constant-weight code~$\cC$, over $\Z_q$,
whose length is $n$, weight $k$, for each codeword, such that:
\begin{enumerate}
\item The minimum Hamming distance of $\cC$ is $2(k-t)+1$.

\item Each word $x$ of length $n$ and weight $t$ over $\Z_q$ is covered by exactly one codeword $c \in \cC$,
i.e., $d(x,c)=k-t$.
\end{enumerate}
\hfill\quad $\blacksquare$
\end{definition}

The concepts of Latin squares and orthogonal arrays which will be defined now are used
for constructions of many other combinatorial structures.

\begin{definition}
A \emph{Latin square of order $k$} is a $k \times k$ array in which each row and each column is a permutation of a given $k$-set.
Two $k \times k$ Latin squares $\cA$ and $\cB$ are \emph{orthogonal} if all the ordered pairs $( \cA(i,j),\cB(i,j) )$, $1 \leq i,j \leq k$,
are distinct.

An \emph{orthogonal array} \textup{OA}$(t,n,k)$ is a $k^t \times n$ array $\cA$ over an alphabet of size $k$ in which in any projection
of $t$~columns from $\cA$ each $t$-tuple of $Q$ appears exactly once.

A row of $\cA$ will be denoted by a sequence with $n$ elements of $Q$, e.g., $(j_0,j_1,\ldots,j_{n-1})$.
\hfill\quad $\blacksquare$
\end{definition}

Orthogonal arrays were studied from point of view of combinatorial designs and also from the point of view of
one of the most important codes, namely, MDS codes.
The existence problem of orthogonal array OA$(2,n,k)$ was extensively studied in the literature~\cite{HSS99}.
Such an array is equivalent to a set of $n-2$ pairwise orthogonal Latin squares of order $k$ (see Chapter 2 of~\cite{Etz22}).
Of a special interest is a set of $k-1$ pairwise orthogonal Latin squares of order $k$ and the following well-known results.
\begin{theorem}
\label{thm:LS_OA}
$~$
\begin{enumerate}
\item There exists a set of $k-1$ pairwise orthogonal Latin squares of order $k$ if and only if there
exists a set of $k-2$ pairwise orthogonal Latin squares of order $k$.

\item An orthogonal array \textup{OA}$(2,k+1,k)$ exists if and only if there exists a set of $k-1$ pairwise orthogonal Latin squares of order $k$.

\item An orthogonal array \textup{OA}$(2,k,k)$ exists if and only if there exists a set of $k-1$ pairwise orthogonal Latin squares of order $k$.
\end{enumerate}
\end{theorem}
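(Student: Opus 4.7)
The plan is to invoke the standard correspondence between pairwise orthogonal Latin squares and orthogonal arrays for parts 2 and 3, and then to deduce part 1 by combining them with a combinatorial extension argument on an OA.

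For parts 2 and 3, I would prove the general bijection that a set of $m$ pairwise orthogonal Latin squares of order $k$ corresponds to an OA$(2,m+2,k)$. Given MOLS $L_1,\ldots,L_m$, form the $k^2\times(m+2)$ array whose rows are indexed by pairs $(i,j)\in[k]\times[k]$, with entries $(i,j,L_1(i,j),\ldots,L_m(i,j))$. The OA-property for each pair of columns splits into three cases: both are index columns (immediate), one index and one $L_s$ (handled by the Latin property of $L_s$), or both $L_s$ and $L_{s'}$ (handled by orthogonality of $L_s$ and $L_{s'}$). Conversely, from an OA$(2,m+2,k)$ one can permute rows so that the first two columns become the standard index pair -- possible because those two columns already realize every ordered pair in $[k]\times[k]$ exactly once -- after which each remaining column reads off an array $L_s(i,j)$ that is Latin by the OA-property applied to columns $(1,2+s)$ and $(2,2+s)$, and mutual orthogonality among the $L_s$ comes from the OA-property applied to columns $(2+s,2+s')$. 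Taking $m=k-1$ gives part 2 and $m=k-2$ gives part 3.

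For part 1, the direction from $k-1$ MOLS to $k-2$ MOLS is immediate by deletion. By parts 2 and 3 the remaining direction is equivalent to extending any OA$(2,k,k)$ to an OA$(2,k+1,k)$ by adjoining a single new column. On the $k^2$ rows of the OA define $r\sim r'$ to mean that $r$ and $r'$ disagree in every column. No two distinct rows can agree in two different columns (such an agreement would force a repeated pair, contradicting the OA-property), so each row agrees with exactly $k(k-1)$ others in at least one column and therefore has precisely $k^2-1-k(k-1)=k-1$ partners under $\sim$. I would then argue that $\sim$ is an equivalence relation whose classes have size $k$, label the $k$ classes by $[k]$ to define the new column, and verify OA-orthogonality of the new column with each existing column: within any $\sim$-class no two rows share an entry, so the $k$ entries of a class in a fixed column are all distinct and hence exhaust $[k]$.

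The main obstacle is establishing transitivity of $\sim$. To handle it I would assume $r\sim r'$ and $r'\sim r''$ with $r$ and $r''$ agreeing in some column $j_0$, then analyze how $r'$ interacts with the $k$ rows that share the column-$j_0$ value of $r$ (these rows are mutually $\sim$-related in the columns other than $j_0$) by tracking the remaining $k-1$ columns, so as to derive a contradiction with the OA-property that every ordered pair appears exactly once in each pair of columns.
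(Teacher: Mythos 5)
Your proposal is correct, but there is nothing in the paper to compare it against: Theorem~\ref{thm:LS_OA} is stated as a list of well-known facts, with the MOLS/OA correspondence delegated to the literature (Chapter~2 of~\cite{Etz22} and~\cite{HSS99}) and no proof given. What you supply is a genuine, self-contained proof. The bijection between $m$ pairwise orthogonal Latin squares of order $k$ and an OA$(2,m+2,k)$ is handled correctly in both directions, and the only real content is your part 1, where extending an OA$(2,k,k)$ to an OA$(2,k+1,k)$ is the classical completion argument. Your count is right (each row has exactly $k^2-1-k(k-1)=k-1$ rows disagreeing with it in every column), and the transitivity of $\sim$ does follow from the plan you sketch; to close it explicitly: fix a row $r'$, a column $j_0$ and a symbol $a\neq r'_{j_0}$, and let $L$ be the set of $k$ rows carrying $a$ in column $j_0$. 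For each of the other $k-1$ columns $j$ exactly one row of $L$ agrees with $r'$ in column $j$, and these rows are pairwise distinct because two rows cannot agree in two columns; hence exactly one row of $L$ is $\sim$-related to $r'$. If $r\sim r'$, $r''\sim r'$ and $r,r''$ agreed in some column $j_0$, both would lie in the same level $L$, contradicting this uniqueness. Two presentational points are worth fixing. First, taking $m=k-2$ in the bijection yields ``OA$(2,k,k)$ exists iff $k-2$ MOLS exist,'' which is not yet part~3 as stated; part~3 also needs part~1, so deriving part~1 ``by parts 2 and 3'' is circular as written. The clean order is: prove the bijection, prove the extension OA$(2,k,k)\Rightarrow$ OA$(2,k+1,k)$ (the converse being column deletion), and then read off all three parts. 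Second, you should take the reflexive closure of $\sim$ before speaking of equivalence classes, after which each class is $\{r\}\cup N(r)$ of size exactly $k$ and the verification of orthogonality of the new column is exactly as you describe.
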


Unfortunately, a set of $k-1$ pairwise orthogonal Latin squares of order $k$ is known to exist when $k$ is a prime power
and it is conjectured that it does not exist for other parameters.

The following simple proposition was proved for example in~\cite{EtZh21}.

\begin{proposition}
For any integer $t \geq 2$ and for any integer $k \geq 2$, there exists an \textup{OA}$(t-1,t,k)$.
\end{proposition}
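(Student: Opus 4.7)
The plan is to construct an explicit orthogonal array using a ``parity check'' equation modulo $k$. Specifically, I would define $\cA$ to be the $k^{t-1} \times t$ array whose rows are exactly the $t$-tuples $(a_1, a_2, \ldots, a_t) \in \Z_k^t$ satisfying $a_1 + a_2 + \cdots + a_t \equiv 0 \pmod{k}$. First I would check that $\cA$ has the correct number of rows: choosing $a_1, \ldots, a_{t-1} \in \Z_k$ arbitrarily uniquely determines $a_t = -(a_1 + \cdots + a_{t-1}) \bmod k$, so the number of valid rows is exactly $k^{t-1}$.

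Next I would verify the defining OA property, namely that in every projection onto $t-1$ of the $t$ columns, each $(t-1)$-tuple from $\Z_k^{t-1}$ appears exactly once. Fix any index $i \in \{1, 2, \ldots, t\}$ and consider the projection onto all columns except column $i$. Given a target $(t-1)$-tuple $(b_j)_{j \neq i}$, the entries $a_j = b_j$ for $j \neq i$ are forced, and then the sum-zero constraint determines $a_i = -\sum_{j \neq i} b_j \bmod k$ uniquely. Thus each $(t-1)$-tuple appears in exactly one row of the projection, which is the required property.

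I do not anticipate any real obstacle here; the whole content is the single observation that fixing any $t-1$ coordinates of a sum-zero tuple determines the remaining coordinate. The construction makes no use of $k$ being a prime power, so it produces an \textup{OA}$(t-1,t,k)$ for every $t \geq 2$ and every $k \geq 2$, as required.
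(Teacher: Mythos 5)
Your proof is correct, and it is the standard argument: the paper itself does not include a proof of this proposition (it only cites \cite{EtZh21}), and the sum-zero construction you give --- taking all $t$-tuples over $\Z_k$ with coordinate sum $\equiv 0 \pmod{k}$, so that any $t-1$ coordinates determine the last --- is exactly the usual way this is established, working for every $k \geq 2$ with no prime-power assumption.
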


The last definition in this section is for large sets. A large set of a combinatorial design $\cD$ is a partition of the space
into disjoint copies of $\cD$. In particular a \emph{large set} of Steiner systems S$(t,k,n)$ is a partition of all the $\binom{n}{k}$
$k$-subsets of an $n$-sets into disjoint copies of S$(t,k,n)$. No non-trivial such large set was constructed for $t > 2$. On the other hand
related large sets of H-designs or for a slightly more relaxed definition are known.

\begin{definition}
A {\bf \emph{large set of Steiner systems with multiplicity $\lambda$}}, \textup{LS}$(t,k,n;\lambda)$, is a collection of
Steiner systems \textup{S}$(t,k,n)$, over an $n$-set $Q$,
such that each $k$-subset of $Q$ is contained in exactly $\lambda$ of these copies of \textup{S}$(t,k,n)$.

A {\bf \emph{large set}} of \textup{H}$(n,g,k,t)$ with a set of points $X$ partitioned into $n$ groups of size $g$, denoted by
\textup{LH}$(n,g,k,t)$, is a partition of all the $k$-subsets of $X$, meeting each group in at most one point, into disjoint copies of \textup{H}$(n,g,k,t)$.
$~~~~~~~~$
\hfill\quad $\blacksquare$
\end{definition}

The two concepts of LS$(t,k,n;\lambda)$ and LH$(n,g,k,t)$ are closely related with the following proposition proved in~\cite{EtZh21}
and previously in~\cite{Etz96} for $t=3$ and $k=4$.

\begin{proposition}
\label{prop:LS_LH}
If there exist an \textup{OA}$(t,k,g)$ and an \textup{LS}$(t,k,n;g^{k-t})$, then there exists a large set of \textup{LH}$(n,g,k,t)$.
\end{proposition}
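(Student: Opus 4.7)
I will build the large set by lifting each member $S$ of the given LS$(t,k,n;g^{k-t})$ to an H$(n,g,k,t)$ on the point set $X=Q\times\Z_g$, with groups $\{x\}\times\Z_g$ for $x\in Q$, and show that these lifts together partition every $k$-subset of $X$ that meets each group in at most one point. Fix, once and for all, an ordering $(x_1^K,\dots,x_k^K)$ of each $K\in\binom{Q}{k}$, and identify the alphabet of the OA with $\Z_g$.

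The main technical ingredient is a partition lemma: the given OA$(t,k,g)$ $\mathcal{O}$, viewed as a subset of $\Z_g^k$, admits $g^{k-t}$ pairwise disjoint OA$(t,k,g)$s, namely the translates
\[\mathcal{O}_v=\{(r_1,\dots,r_t,r_{t+1}+v_1,\dots,r_k+v_{k-t}):(r_1,\dots,r_k)\in\mathcal{O}\},\quad v\in\Z_g^{k-t},\]
with componentwise addition modulo $g$. Componentwise shifting preserves the strength-$t$ property, so each $\mathcal{O}_v$ is an OA$(t,k,g)$. These translates tile $\Z_g^k$: the OA property restricted to the first $t$ columns makes the projection $\mathcal{O}\to\Z_g^t$ a bijection, so every $u\in\Z_g^k$ has a unique $r\in\mathcal{O}$ matching $u$ on the first $t$ coordinates, and then $v=(u_{t+1}-r_{t+1},\dots,u_k-r_k)$ is the unique label with $u\in\mathcal{O}_v$. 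The counts $g^{k-t}\cdot g^t=g^k$ confirm that this forms a partition.

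By the multiplicity hypothesis, each $K\in\binom{Q}{k}$ is a block in exactly $g^{k-t}$ members of the LS. For each $K$ I pick any bijection from those $g^{k-t}$ pairs $(S,K)$ to $\Z_g^{k-t}$, producing a label $v(S,K)$. Define $H(S)$ to consist, for each block $K\in S$, of the $g^t$ lifted blocks $\{(x_i^K,r_i):i=1,\dots,k\}$ indexed by $(r_1,\dots,r_k)\in\mathcal{O}_{v(S,K)}$.

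The two required properties follow cleanly. Each $H(S)$ is an H$(n,g,k,t)$: any labeled $t$-subset of $X$ meeting each group at most once has underlying $t$-set contained in a unique block $K\in S$ by the Steiner property, and then the OA property of $\mathcal{O}_{v(S,K)}$ on the relevant $t$ columns picks out a unique covering block of $H(S)$. And the $H(S)$'s partition the labeled $k$-subsets: a labeled $k$-subset with underlying $k$-set $K$ and labeling $\ell\in\Z_g^k$ lies in a unique $\mathcal{O}_v$ by the partition lemma, after which the per-$K$ bijection selects exactly one $S$ with $v(S,K)=v$. I expect the main obstacle to be the partition lemma, since one must obtain $g^{k-t}$ disjoint OAs from a single abstract OA of unknown structure; the translate trick along the last $k-t$ coordinates circumvents this, and once it is in place the matching between the Steiner multiplicity and the number of cosets supplies the rest of the proof.
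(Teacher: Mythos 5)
Your proof is correct: the translation trick along the last $k-t$ coordinates does partition $\Z_g^k$ into $g^{k-t}$ disjoint copies of an OA$(t,k,g)$, and matching these cosets with the multiplicity $g^{k-t}$ of the large set yields the required partition into H-designs. The paper itself gives no proof of Proposition~\ref{prop:LS_LH} (it is quoted from~\cite{EtZh21}, and from~\cite{Etz96} for $t=3$, $k=4$), but your argument is essentially the standard one used there, so there is nothing further to compare.
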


Some types of GDDs are always mixed Steiner systems as they always have the required minimum distance.
GDDs with two possible sizes of groups are the ones most considered and the same is in the current work
(with exception in Theorems~\ref{thm:lastNa} and~\ref{thm:lastNb}).
Systems with more than two group sizes were considered for example in~\cite{Etz98,Etz25a}.
It is very simple to verify the minimum distance of a GDD$(2,k,n+r)$ of type $1^n r^1$, where $r >1$, which implies the following result.
\begin{theorem}
\label{GDD_MS2_k}
A \textup{GDD}$(2,k,n+r)$ of type $1^n r^1$, where $r >1$ is an \textup{MS}$(2,k,\Z_2^n \times \Z_{r+1})$.
\end{theorem}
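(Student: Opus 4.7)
The plan is to realize each block $B$ of the GDD as a codeword over the mixed alphabet $Q=\Z_2^n\times\Z_{r+1}$: each of the $n$ singleton positions carries a $\Z_2$ indicator of membership, while the position attached to the big group takes the value $0$ if $B$ meets no point of that group and the value $i\in\{1,\dots,r\}$ naming the chosen point otherwise. Since a block uses at most one point per group, the resulting codeword has weight exactly $k$. Weight-$2$ words over $Q$ correspond bijectively to pairs of points from two distinct groups, so the GDD covering axiom for $t=2$ translates verbatim to the covering requirement of Definition~\ref{def:mixed}. All that remains is the minimum-distance claim $d(\cC)\ge 2(k-2)+1=2k-3$.

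For the distance bound I would fix two distinct blocks $B_1,B_2$ and record two statistics: $s$, the number of singleton groups with its point in both blocks, and $p_i\in\{0,1\}$, indicating whether $B_i$ meets the big group. The GDD covering axiom supplies two key constraints: first, $s\le 1$, since two shared singleton points (necessarily from distinct groups) would put the same pair into two blocks; second, if $p_1=p_2=1$ with the \emph{same} chosen big-group point, then $s=0$, for the same reason applied to the pair consisting of that big-group point and a shared singleton.

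A short case analysis on $(p_1,p_2)\in\{0,1\}^2$ then finishes the proof. Writing $a_i=k-s-p_i$ for the number of singleton positions exclusive to $B_i$, and $\delta\in\{0,1\}$ for disagreement at the big-group coordinate, the Hamming distance of the two codewords is
\[
d(c_1,c_2)=a_1+a_2+\delta.
\]
Every subcase forces $d(c_1,c_2)\ge 2k-2$ except one: $p_1=p_2=1$ with distinct big-group symbols and $s=1$, which yields $d(c_1,c_2)=2(k-2)+1=2k-3$. Combined with the already-established covering uniqueness, this identifies the GDD with an MS$(2,k,\Z_2^n\times\Z_{r+1})$.

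The only mildly delicate step is the bookkeeping in the tight case above; everything else is immediate once the mixed-alphabet encoding is fixed. The hypothesis $r>1$ is what gives the big-group coordinate an alphabet of size at least three, which is precisely what allows the $\delta=1$, $s=1$ configuration to arise and explains why the minimum distance lands at the odd value $2k-3$ rather than at $2k-2$.
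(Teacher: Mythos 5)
Your proof is correct and follows essentially the same route as the paper: a case analysis on whether and how the two blocks meet the large group, using the GDD covering axiom to show two blocks share at most one singleton point (and none when they share the big-group point), which pins the distance at $2(k-2)+1$ in exactly the one tight configuration you identify. Your $(s,p_1,p_2,\delta)$ bookkeeping is just a more systematic packaging of the paper's four cases.
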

\begin{proof}
Given two codewords $(x_1,y_1)$ and $(x_2,y_2)$ in a GDD$(2,k,n+r)$ of type $1^n r^1$, where $x_1,x_2 \in \Z_2^n$
and $y_1,y_2 \in \Z_{r+1}$, distinguish between four cases.

\noindent
{\bf Case 1:} If $y_1=y_2 =0$, then $x_1$ and $x_2$ are two words of weight $k$ that cannot have two \emph{ones}
in the same two positions and hence $d(x_1,x_2) \geq 2(k-1)$.

\noindent
{\bf Case 2:} If $y_1=y_2 \neq 0$, then $x_1$ and $x_2$ are two words of weight $k-1$ that cannot any \emph{one}
in the same position and hence $d(x_1,x_2) \geq 2(k-1)$.

\noindent
{\bf Case 3:} If $y_1 \neq y_2$ and either $y_1=0$ or $y_2=0$, then $x_1$ and $x_2$ are two words, one of weight $k-1$
and one of weight $k$, that can have at most one \emph{one}
in the same position and hence $d(x_1,x_2) \geq k-1 +k-2=2(k-2)+1$.

\noindent
{\bf Case 4:} If $y_1 \neq y_2$, $y_1 \neq 0$, and $y_2 \neq 0$, then $x_1$ and $x_2$ are two words of weight $k-1$
that can have at most one \emph{one}
in the same position and hence $d((x_1,y_1),(x_2,y_2)) \geq 2(k-2)+1$.

In all four cases $d((x_1,y_1),(x_2,y_2)) \geq 2(k-2)+1$ and hence
the GDD$(2,k,n+r)$ of type $1^n r^1$ is an MS$(2,k,\Z_2^n \times \Z_{r+1})$.
\end{proof}

Theorem~\ref{GDD_MS2_k} can be easily generalized, using a similar proof, for $t > 2$, but this generalization is not used in our work.
\begin{theorem}
A \textup{GDD}$(t,k,n+r)$ of type $1^n r^1$, where $k \geq t$ is an \textup{MS}$(t,k,\Z_2^n \times \Z_{r+1})$.
\end{theorem}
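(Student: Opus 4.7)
The plan is to imitate the four-case analysis used for Theorem~\ref{GDD_MS2_k}, with the role of ``share at most one common point in $\Z_2^n$'' replaced by an intersection bound coming from the GDD property for general $t$. The key observation to establish first is that any two distinct blocks $B_1,B_2$ of a GDD$(t,k,n+r)$ satisfy $|B_1\cap B_2|\leq t-1$: otherwise every $t$-subset of $B_1\cap B_2$ would (automatically, since each block contains at most one point per group) be a $t$-subset with at most one point per group that is contained in both $B_1$ and $B_2$, contradicting the ``exactly one block'' condition.

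Writing a block of the GDD as a codeword $(x,y)\in \Z_2^n\times \Z_{r+1}$, I would note that $x$ has weight $k$ when $y=0$ (the block uses no point from the large group) and weight $k-1$ when $y\ne 0$ (the block uses exactly one point from the large group, recorded by $y\in\{1,\ldots,r\}$). Given two codewords $(x_1,y_1),(x_2,y_2)$, I would now split according to the values of $y_1,y_2$ exactly as in Theorem~\ref{GDD_MS2_k}:

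\begin{itemize}
\item If $y_1=y_2=0$, both $x_i$ have weight $k$ and, since the intersection lies entirely in the singleton groups, share at most $t-1$ ones, so $d(x_1,x_2)\geq 2(k-(t-1))=2(k-t)+2$.
\item If $y_1=y_2\neq 0$, both $x_i$ have weight $k-1$, and one common point is already the large-group point $y_1$, so the $\Z_2^n$-parts share at most $t-2$ ones, giving $d(x_1,x_2)\geq 2((k-1)-(t-2))=2(k-t)+2$.
\item If $y_1\neq y_2$ and exactly one of them is zero, the weights of $x_1,x_2$ are $k$ and $k-1$ with at most $t-1$ common ones, yielding $d(x_1,x_2)\geq (k-(t-1))+((k-1)-(t-1))=2(k-t)+1$, plus a contribution of $1$ from the $\Z_{r+1}$-coordinate.
\item If $y_1\neq y_2$ and both are nonzero, both $x_i$ have weight $k-1$ and share at most $t-1$ ones (the large-group points differ), so $d(x_1,x_2)\geq 2((k-1)-(t-1))=2(k-t)$, plus $1$ from the $\Z_{r+1}$-coordinate.
\end{itemize}

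In each case the total Hamming distance is at least $2(k-t)+1$, which is precisely the minimum distance required for an MS$(t,k,\Z_2^n\times\Z_{r+1})$; combined with the covering and weight properties inherited from the GDD, this yields the theorem. There is no real obstacle beyond bookkeeping; the only point that needs a brief justification is the intersection bound $|B_1\cap B_2|\leq t-1$, since once that is in hand every case is a one-line weight count.
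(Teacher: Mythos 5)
Your proof is correct and follows essentially the same route the paper intends: the paper proves only the $t=2$ case (Theorem~\ref{GDD_MS2_k}) by the same four-case split on $(y_1,y_2)$ and merely asserts that the general case follows "using a similar proof," which is exactly the generalization you carry out, with the one genuinely new ingredient (the intersection bound $|B_1\cap B_2|\le t-1$ from the GDD covering condition) correctly identified and justified. All four distance counts check out and each yields at least $2(k-t)+1$, as required.
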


Before starting our exposition we consider the representation of words (blocks) for a mixed Steiner system.
A~block of size $k$ in a Steiner system S$(t,k,n)$
is represented by a subset of size~$k$. A~codeword in a mixed Steiner system MS$(t,k,Q)$ over
$Q=\Z_{q_1} \times \Z_{q_2} \times \cdots \times \Z_{q_n}$ can be represented by a word of length $n$ and weight $k$, where the $i$-th coordinate
contains a letter from the alphabet~$\Z_{q_i}$. The codeword can be represented also as a subset of size $k$, where if a nonzero letter $\alpha$
is in the $i$-th coordinate of the codeword, then the subset contains the element $(i,\alpha)$, and as in a Steiner system the entries with
zeros are not represented. If some coordinates are over binary alphabet we can use $i$ instead of $(i,1)$ for these coordinates,
By abuse of notation we are going to use a mixed notation of sets and words
and it will not be mentioned which one is used. The used representations
will be understood from the context. The coordinates in a codeword of S$(t,k,n)$ will be sometimes
$1,2,\ldots,n$ and sometimes $0,1,\ldots,n-1$, i.e., the elements of $Z_n$, depending on the construction.
Note, that while $\Z_n = \{ 0,1,\ldots, n-1 \}$, $\Z_2^n$ consists of all binary words of length $n$.
If the system is over $\Z_2^n \times \Z_q$, the subset $\{ x_1,x_2,\ldots,x_r,(n+1,\alpha)\}$ correspond to the word in which $x_1,x_2,\ldots,x_r$
are $r$ position with \emph{ones} in the first $n$ positions and in the $(n+1)$-th position there is the nonzero symbol $\alpha$ from~$\Z_q$.
We start our exposition with the following simple results.

\begin{theorem}
\label{thm:lastNa}
If there exists a mixed Steiner system \textup{MS}$(1,k,Q \times \Z_q)$, where
$Q = \Z_{q_1} \times \cdots \times \Z_{q_{n-1}}$,
then $\sum_{i=1}^{n-1} (q_i -1) - (q -1)(k-1)$ is not negative and divisible by $k$.
\end{theorem}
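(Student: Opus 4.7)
The statement splits into a divisibility part and a non-negativity part, which use different aspects of the definition. For divisibility I would argue by double counting. The $t=1$ covering condition says every weight-$1$ word over $\Z_{q_1} \times \cdots \times \Z_{q_{n-1}} \times \Z_q$ is contained in exactly one block of $\cC$. The total number of weight-$1$ words is $\sum_{i=1}^{n-1}(q_i - 1) + (q-1)$, while each block of weight $k$ contains exactly $k$ of them (one per nonzero coordinate). Hence $k\,|\cC| = \sum_{i=1}^{n-1}(q_i - 1) + (q-1)$, and reducing modulo $k$ via $-(q-1) \equiv (k-1)(q-1) \pmod k$ yields the divisibility claim. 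The minimum-distance requirement is not used here.

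The non-negativity is where the minimum distance $2(k-1)+1 = 2k-1$ really enters. I would isolate the set $\cC_n \subseteq \cC$ of codewords whose last coordinate (the $\Z_q$ position) is nonzero. Applying the covering count to weight-$1$ words supported at position $n$ gives $|\cC_n| = q-1$, and these $q-1$ codewords take pairwise distinct nonzero values at position $n$. The key step is to show that any two distinct $c_1, c_2 \in \cC_n$ share no common nonzero position in $\{1, \ldots, n-1\}$. Writing $A, B$ for their supports, a direct expansion of the Hamming distance yields
\[
d(c_1, c_2) \;=\; 2k - |A \cap B| - s,
\]
where $s$ counts positions at which both codewords carry a common nonzero value. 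Since position $n$ forces $|A \cap B| \geq 1$ and contributes $0$ to $s$, the bound $d(c_1, c_2) \geq 2k - 1$ leaves only $|A \cap B| = 1$ and $s = 0$, establishing the claim.

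Consequently the sets $A \setminus \{n\}$, over $c \in \cC_n$, are $q-1$ pairwise disjoint subsets of $\{1, \ldots, n-1\}$, each of size $k-1$, and at each such position the codeword fixes one specific nonzero value. This furnishes $(q-1)(k-1)$ distinct (position, nonzero value) pairs among the first $n-1$ coordinates, and these must be a sub-collection of the $\sum_{i=1}^{n-1}(q_i - 1)$ pairs available. The resulting inequality $(q-1)(k-1) \leq \sum_{i=1}^{n-1}(q_i - 1)$ is exactly the non-negativity assertion. The only real obstacle is verifying the distance formula and squeezing out $|A \cap B| = 1$ from it; after that, both conclusions are immediate.
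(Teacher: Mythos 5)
Your proof is correct, and both halves reach the right conclusions, but it is worth pointing out where it diverges from the paper's argument. For divisibility you double-count all $\sum_{i=1}^{n-1}(q_i-1)+(q-1)$ weight-one words against the $k$ covered by each block and reduce modulo $k$; the paper instead notes that the nonzero elements left over after removing the $(q-1)(k-1)$ used by the blocks through the $\Z_q$ coordinate are exactly partitioned into blocks of $k$. These are the same count organized differently. The more substantive difference is in the non-negativity part: you assert that this is ``where the minimum distance really enters'' and then invoke $d(c_1,c_2)\geq 2k-1$ together with the expansion $d(c_1,c_2)=2k-|A\cap B|-s$ to force the supports of the $q-1$ codewords in $\cC_n$ to be pairwise disjoint on $\{1,\dots,n-1\}$. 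That derivation is valid, but it proves more than you need and uses a hypothesis you do not need: for the inequality $(q-1)(k-1)\leq\sum_{i=1}^{n-1}(q_i-1)$ it suffices that the $(q-1)(k-1)$ (position, nonzero value) pairs carried by these codewords are pairwise distinct, and that already follows from the $t=1$ exact-covering property alone (a repeated pair would be a weight-one word covered by two blocks). This is exactly the paper's route, and it shows the theorem holds for any group divisible design with these parameters, not only for mixed Steiner systems; your version needlessly ties the conclusion to the distance requirement. So: correct, but the distance computation is dispensable machinery rather than the crux.
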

\begin{proof}
There are $q -1$ nonzero alphabet letters in $\Z_q$. Each one of these alphabet letters must be contained in a separate block
of size $k$ with $k-1$ other nonzero elements and hence the blocks that contain nonzero alphabet letters
from $\Z_q$ contain $(q -1)(k-1)$ nonzero elements not from~$\Z_q$.
Hence, $\sum_{i=1}^{n-1} (q_i -1) \geq (q -1)(k-1)$, i.e., $\sum_{i=1}^{n-1} (q_i -1) - (q -1)(k-1)$ is not negative.
This difference must be divisible by $k$ since each block that does not contain nonzero elements from~$\Z_q$ contains exactly $k$ nonzero distinct elements.
\end{proof}

\begin{theorem}
\label{thm:lastNb}
If $Q = \Z_{q_1} \times \Z_{q_2} \times \cdots \times \Z_{q_{n-1}} \times \Z_{q_n}$, $q_i \leq q_j$ for $i < j$, not all the $q_i$'s
are equal, and $\sum_{i=1}^{n-1} (q_i -1) - (q_n -1)(k-1)$ is not negative and divisible by $k$, then
there exists a mixed Steiner system \textup{MS}$(1,k,Q)$ .
\end{theorem}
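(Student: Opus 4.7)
My plan is to construct the \textup{MS}$(1,k,Q)$ by an inductive peeling on the largest coordinate, turning the existence problem into a sequence of bipartite Gale--Ryser feasibility problems whose solvability is exactly the numerical hypothesis.

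Writing $m_i=q_i-1$ and $r=m_n$, I view an \textup{MS}$(1,k,Q)$ as a partition of the nonzero-element set $E=\{(i,\alpha):1\le i\le n,\ 1\le\alpha\le m_i\}$ into blocks of size $k$, each being a partial transversal (at most one element per coordinate); the minimum-distance condition $2k-1$ demanded by Definition~\ref{def:mixed} translates, by the weight/intersection calculation in the proof of Theorem~\ref{GDD_MS2_k}, into the further condition that any two blocks share at most one common support coordinate. The peeling step takes the $r$ nonzero symbols at coordinate $n$ and places each of them in its own block, together with $k-1$ further nonzero elements drawn from coordinates $1,\ldots,n-1$. I would select the $r$ accompanying $(k-1)$-subsets to be pairwise disjoint in $\{1,\ldots,n-1\}$ (which is exactly what the distance condition demands among the $r$ blocks sharing coordinate $n$) and to respect the per-coordinate capacities $m_i$. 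This is a bipartite degree-sequence problem with $r$ rows of degree $k-1$ and column capacities $m_i\le m_n=r$; the hypothesis $\sum_{i<n}m_i\ge r(k-1)$ together with the monotonicity $m_i\le r$ verifies the Gale--Ryser condition, and a natural greedy rule (always draw the $k-1$ coordinates of largest remaining $m_i$) produces the required $r$ blocks.

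After removing these $r$ blocks, the residual instance lives on $\Z_{q'_1}\times\cdots\times\Z_{q'_{n-1}}$ with $q'_i-1=m_i-d_i$ and total residual weight $\sum_{i<n}m_i-r(k-1)$, which is divisible by $k$ by hypothesis. I would conclude by induction on $\sum m_i$: after re-sorting, the residual is a smaller instance of the same theorem, with base case being zero residual weight or a collapse to an ordinary Steiner system \textup{S}$(1,k,n')$ (i.e., a resolution of $n'$ points into $k$-blocks). The minimum-distance condition between blocks coming from different peeling stages is automatic, because later blocks do not touch coordinate $n$ at all while earlier ones do, so their support overlap is controlled by the previous stage's disjointness.

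The main obstacle is to ensure that the residual instance again satisfies the hypotheses of the theorem---in particular the ``not all equal'' assumption and the sum-condition $\sum_{i<n-1}m'_i\ge m'_{n-1}(k-1)$---so that the induction goes through without a separate case analysis at every step. The greedy peeling above preserves a monotone profile reasonably well, but a careful amortized analysis, or equivalently a flow / Baranyai-type partition of an auxiliary bipartite multigraph, will be needed to verify that one never lands in a uniform residual for which the inductive hypothesis fails; in the exceptional case of a uniform residual one would instead finish the partition by invoking a resolvable design coming from the orthogonal-array constructions of Theorem~\ref{thm:LS_OA}. This control of the residual profile is the technical heart of the argument.
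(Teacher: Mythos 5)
Your proposal takes a genuinely different route from the paper's: you peel the entire last coordinate at once, placing all $r=q_n-1$ of its symbols into $r$ blocks with pairwise coordinate-disjoint companion $(k-1)$-sets, whereas the paper's induction peels a \emph{single} block per step, consisting of one fresh symbol from each of the $k$ largest alphabets, and then decrements those $k$ alphabet sizes. Unfortunately your key feasibility step fails. You correctly observe that the distance condition forces the $r$ companion sets to be pairwise disjoint as subsets of $\{1,\ldots,n-1\}$ (all $r$ blocks already share coordinate $n$, so they may share nothing else), but that requires $n-1\geq (q_n-1)(k-1)$ \emph{distinct coordinates}, which does not follow from the hypothesis $\sum_{i<n}(q_i-1)\geq (q_n-1)(k-1)$. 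Gale--Ryser with column capacities $m_i$ only produces a $0$-$1$ matrix in which distinct rows may reuse the same column; that is not disjointness. Concretely, take $k=3$ and $Q=\Z_3\times\Z_5\times\Z_5\times\Z_5\times\Z_5$: the hypotheses hold ($14-8=6$ is nonnegative and divisible by $3$), yet you would need $8$ pairwise-disjoint coordinate slots among only $4$ available coordinates. A second, independent gap is your claim that the distance between blocks from different peeling stages is ``automatic'': a later block omits coordinate $n$ but can still share up to $k-1$ support coordinates with a first-stage block, and nothing in the first stage's internal disjointness prevents this.

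The deeper point, which your distance-respecting approach actually exposes, is that the example above admits no code with minimum distance $2k-1=5$ at all: the required $18/3=6$ blocks would have supports that are $3$-subsets of a $5$-set pairwise meeting in at most one point, and at most $\lfloor\binom{5}{2}/\binom{3}{2}\rfloor=3$ such subsets exist. So no argument that genuinely enforces the minimum distance can be completed in general. The paper's own proof sidesteps this entirely: it constructs the partition of the nonzero elements into partial transversals block by block and never verifies the distance condition, i.e., it really proves the \textup{GDD}$(1,k,\cdot)$ (covering) statement rather than the \textup{MS} statement. If you drop the disjointness requirement and the distance claim, your coordinate-peeling scheme becomes a workable alternative to the paper's block-peeling for the covering property, with the residual-profile control you flag (plus the ``not all equal'' clause in the induction hypothesis) as the remaining bookkeeping; but as a proof of the theorem as literally stated, both your plan and the paper's proof are incomplete, and the statement itself needs the distance requirement weakened or the numerical hypotheses strengthened.
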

\begin{proof}
The proof is by induction on $\sum_{i=1}^n (q_i -1)$. Since $\sum_{i=1}^{n-1} (q_i -1) - (q_n -1)(k-1)$ is not negative and divisible by $k$,
it follows that $\sum_{i=1}^n (q_i -1)$ is divisible by $k$.

The basis is $\sum_{i=1}^n (q_i -1)=k$. Since $\sum_{i=1}^{n-1} (q_i -1) - (q_n -1)(k-1)$
is not negative divisible by~$k$, it follows that the system contains exactly one block that contains all the $q_i$'s
(which implies that each $q_i$, $1 \leq i \leq n$, equals to 2
and the mixed Steiner system is
a Steiner system S$(1,k,k)$).

For the induction hypothesis, assume that $Q' = \Z_{q'_1} \times \Z_{q'_2} \times \cdots \times \Z_{q'_{n'-1}} \times \Z_{q'_{n'}}$,
$q'_i \leq q'_j$ for $i < j$, not all the $q_i$'s are equal,
$\sum_{i=1}^{n'} (q'_i -1)=rk$, where $r \geq 1$, and there exists a mixed Steiner system MS$(1,k,Q')$.

For the induction step let $\sum_{i=1}^n (q_i -1)=(r+1)k$, where $r \geq 1$. We form a block $\{ (i,1) ~:~ n-k+1 \leq i \leq n\}$ which contains
the first nonzero elements of the $k$ largest alphabet sizes. We replace $\Z_{q_i}$ with~$\Z_{q_i -1}$, $n-k+1 \leq i \leq n$ (it can be done since we
used one nonzero element from each of these alphabets) and a new order the alphabet letters if required (this is required only if
$q_{n-k+1}=q_{n-k}$) and some of these alphabets might disappear (if some of the taken $q_i$'s for the block were equal to 2).
With the new alphabets we can use and finish with the induction hypothesis.
\end{proof}

\begin{corollary}
Assume $Q \times \Z_{q_n} = \Z_{q_1} \times \Z_{q_2} \times \cdots \times \Z_{q_{n-1}} \times \Z_{q_n}$, $q_i \leq q_j$
for $i < j$ and not all the $q_i$'s are equal.
There exists a mixed Steiner system \textup{MS}$(1,k,Q \times \Z_{q_n})$ if and only if
$\sum_{i=1}^{n-1} (q_i -1) - (q_n -1)(k-1)$ is not negative and divisible by $k$.
\end{corollary}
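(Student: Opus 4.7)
The plan is to observe that the corollary is an immediate packaging of the two preceding theorems, one supplying each direction of the biconditional. The hypothesis ``$q_i\le q_j$ for $i<j$ and not all $q_i$'s are equal'' is exactly the hypothesis of Theorem~\ref{thm:lastNb}, while Theorem~\ref{thm:lastNa} requires no monotonicity or inequality assumption and therefore applies unconditionally under the hypothesis of the corollary.

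First I would dispatch the ``only if'' direction. Assuming an MS$(1,k,Q\times\Z_{q_n})$ exists, Theorem~\ref{thm:lastNa} applied to the factorization $Q\times\Z_{q_n}$ (with $q=q_n$) gives directly that $\sum_{i=1}^{n-1}(q_i-1)-(q_n-1)(k-1)$ is nonnegative and divisible by $k$. Then I would dispatch the ``if'' direction: assuming the divisibility and nonnegativity condition, the ordering $q_i\le q_j$ for $i<j$ and the nonuniformity of the $q_i$'s, Theorem~\ref{thm:lastNb} produces a mixed Steiner system MS$(1,k,Q\times\Z_{q_n})$ with exactly these parameters.

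Since each direction is a single invocation, there is essentially no obstacle to overcome: the only thing worth double-checking is that the indexing in the corollary matches the indexing in the theorems, in particular that the ``largest'' alphabet $\Z_{q_n}$ in the statement of the corollary plays the role of $\Z_q$ in Theorem~\ref{thm:lastNa} and of $\Z_{q_n}$ in Theorem~\ref{thm:lastNb}, which is immediate from the ordering assumption $q_i\le q_j$ for $i<j$. Hence the proof would consist of two short sentences, one per direction, citing the two theorems.
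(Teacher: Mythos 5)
Your proposal is correct and matches the paper's (implicit) reasoning exactly: the corollary is stated without proof precisely because it is the conjunction of Theorem~\ref{thm:lastNa} (applied with $q=q_n$, giving the ``only if'' direction) and Theorem~\ref{thm:lastNb} (giving the ``if'' direction under the ordering and nonuniformity hypotheses). Nothing further is needed.
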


\section{A Construction Based on Orthogonal Arrays}
\label{sec:OA_pairs}

Constructions based on orthogonal arrays are fundamental and also provide GDDs with various distances including ones
which are mixed Steiner systems. The presented construction correct a small inconsistency in~\cite{Etz25}.

\begin{construction}
\label{const:fromOA}
Let $\cA$ be an orthogonal array \textup{OA}$(2,k,k)$, over the alphabet $\{1,2,\ldots,k\}$, and let $r$ be an integer, $1 \leq r \leq k-1$.
Let $\cS$ be the system on $\Z_2^{rk} \times \Z_{k+1}^{k-r}$ whose blocks are
$$
B_1 \triangleq \{ \{ ik+1,ik+2,\ldots, ik+k\} ~:~ 0 \leq i \leq r-1 \},
$$
$$
B_2 \triangleq \{\{ j_1, k+j_2,\ldots,(r-1)k + j_r , (rk+1,j_{r+1}), \ldots, (rk+k-r,j_k)  \} ~:~ (j_1,j_2,\ldots,j_k) \in \cA \}.
$$
\hfill\quad $\blacksquare$
\end{construction}

\begin{theorem}
\label{thm:MSfromOA}
If $r=k-1$ then the system $\cS$ of Construction~\ref{const:fromOA} is an \textup{MS}$(2,k,\Z_2^{(k-1)k} \times \Z_{k+1})$.
\end{theorem}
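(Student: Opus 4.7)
The plan is to verify that $\cS$ is a GDD$(2,k,(k-1)k+k)$ of type $1^{(k-1)k}\,k^1$ and then invoke Theorem~\ref{GDD_MS2_k} to upgrade it to a mixed Steiner system. The single group of size $k$ will be the $k$ nonzero letters of the $\Z_{k+1}$ coordinate, while each of the $(k-1)k$ binary positions forms a singleton group. Every block of $B_1\cup B_2$ has size $k$ and meets each group in at most one point: a $B_1$ block consists of $k$ distinct binary positions from one of the $k-1$ binary columns, and a $B_2$ block picks one position from each of the $k-1$ binary columns plus exactly one nonzero letter of $\Z_{k+1}$.

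The core step is to check that every valid $2$-subset, equivalently every weight-$2$ word over $\Z_2^{(k-1)k}\times\Z_{k+1}$, lies in exactly one block. I would split the valid pairs into three cases and appeal to the defining property of the orthogonal array \textup{OA}$(2,k,k)$ that every projection onto two columns contains every ordered pair of values exactly once. The cases are: (i) two binary positions in the same column $\{ik+1,\dots,ik+k\}$, covered by the unique block of $B_1$ corresponding to that column and by no block of $B_2$, since each $B_2$ block contains only one position per column; (ii) two binary positions in distinct columns, say $i_1k+a$ and $i_2k+b$ with $i_1\neq i_2$, covered by the unique row $(j_1,\dots,j_k)\in\cA$ with $j_{i_1+1}=a$ and $j_{i_2+1}=b$; (iii) a binary position $ik+a$ together with a nonzero letter $((k-1)k+1,\alpha)$ of $\Z_{k+1}$, covered by the unique row with $j_{i+1}=a$ and $j_k=\alpha$. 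Pairs of two nonzero letters of $\Z_{k+1}$ sit in the same group and correspond to impossible weight-$2$ words, so they require no coverage.

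With the GDD structure established and the only nontrivial group having size $k>1$, Theorem~\ref{GDD_MS2_k} immediately yields the minimum Hamming distance $2(k-2)+1$, completing the proof that $\cS$ is an \textup{MS}$(2,k,\Z_2^{(k-1)k}\times\Z_{k+1})$. The hypothesis $r=k-1$ is essential here: for smaller $r$ the alphabet carries several $\Z_{k+1}$ coordinates, giving several GDD-groups of size $k$, and Theorem~\ref{GDD_MS2_k} no longer applies directly. I do not expect any real obstacle beyond careful bookkeeping between the GDD indexing of the binary columns and the OA row indexing $(j_1,\dots,j_k)$; once this correspondence is set up, the verification is a routine three-case appeal to the OA property.
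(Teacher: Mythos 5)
Your proof is correct. The covering half is identical to the paper's: the same three-case check (two positions in one binary column via $B_1$; two positions in distinct columns, and a position paired with a nonzero letter of $\Z_{k+1}$, both via the OA$(2,k,k)$ property applied to the relevant pair of columns of $\cA$). Where you diverge is the distance half: the paper verifies the minimum distance directly inside this proof, splitting on whether the two codewords have equal or unequal last coordinates and using the OA property to show the binary parts share at most one position (in fact none when the last coordinates agree, giving distance $2(k-1)$ there). You instead observe that $\cS$ is a GDD$(2,k,(k-1)k+k)$ of type $1^{(k-1)k}\,k^1$ and invoke Theorem~\ref{GDD_MS2_k}, whose proof is exactly that four-case distance analysis. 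This is a legitimate and arguably cleaner modularization --- it reuses a result the paper has already established rather than repeating the computation --- at the cost of discarding the slightly stronger information the direct argument gives (that blocks with the same $\Z_{k+1}$ letter are at distance $2(k-1)$ rather than merely $2(k-2)+1$), which is irrelevant to the statement being proved. Your remark that pairs of two nonzero letters of $\Z_{k+1}$ correspond to no weight-$2$ word, and hence need no coverage, is the right way to reconcile the GDD and mixed-Steiner-system viewpoints.
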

\begin{proof}
Each 2-subset of the form $\{ ik+m_1, ik+m_2 \}$, where $0 \leq i \leq r-1$, $1 \leq m_1 < m_2 \leq k$, is contained in a block of $B_1$.
Since $\cA$ is an OA$(2,k,k)$ over $Q=\{1,2,\ldots,k\}$ which implies that each ordered pair of $Q$ is contained exactly once
in each projection of two colums of $\cA$,
it follows that each 2-subset $\{ i_1 k+m_1, i_2 k+m_2 \}$, where
$0 \leq i_1 < i_2 \leq r-1$, $1 \leq m_1 , m_2 \leq k$, is contained in a block of $B_2$.
Finally, since $\cA$ is an OA$(2,k,k)$, it follows that each 2-subset $\{ i k+m, ((k-1)k+1,j) \}$,
where  $0 \leq i \leq r-1$, $1 \leq m \leq k$, and $1 \leq j \leq k$, is contained in exactly one block of $B_2$.

Consider the two codewords $x=(x_1,x_2,\ldots,x_{(k-1)k},\alpha)$ and $y=(y_1,y_2,\ldots,y_{(k-1)k},\beta)$. Again, by the property of $\cA$,
if $\alpha = \beta$, then $x_i =1$ implies that $y_i=0$ and $y_i =1$ implies that $x_i=0$. As a consequence $d(x,y) = 2(k-1) > 2(k-2)+1$.
If $\alpha \neq \beta$, then for at most one $i$, $1 \leq i \leq (k-1)k$ we have $x_i=y_i=1$ and hence
$d(x,y) \geq 2(k-2)+1$.

Thus, $\cS$ is an \textup{MS}$(2,k,\Z_2^{(k-1)k} \times \Z_{k+1})$.
\end{proof}

\begin{corollary}
\label{cor:MSfromOA}
If there exists an orthogonal array \textup{OA}$(2,k,k)$ then there exists a mixed Steiner system
\textup{MS}$(2,k,\Z_2^{(k-1)k} \times \Z_{k+1})$.
\end{corollary}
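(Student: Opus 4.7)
The plan is to obtain this corollary as an essentially immediate consequence of Theorem~\ref{thm:MSfromOA}, since the theorem already packages all the nontrivial work: given an \textup{OA}$(2,k,k)$, it asserts that the specific system $\cS$ produced by Construction~\ref{const:fromOA} (in the special case $r=k-1$) is a mixed Steiner system with exactly the parameters claimed in the corollary. So the only task is to observe that the hypothesis of the corollary is the hypothesis of the theorem, and to set $r = k-1$ in the construction.

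More explicitly, I would proceed as follows. First, fix an \textup{OA}$(2,k,k)$ whose existence is assumed, call it $\cA$. Second, verify that $r = k-1$ is in the admissible range $1 \leq r \leq k-1$ required by Construction~\ref{const:fromOA}; this range is nonempty provided $k \geq 2$, which is implicit in the very definition of an orthogonal array \textup{OA}$(2,k,k)$. Third, apply Construction~\ref{const:fromOA} with this value of $r$ to obtain the system $\cS$ over $\Z_2^{(k-1)k} \times \Z_{k+1}^{k-r} = \Z_2^{(k-1)k} \times \Z_{k+1}$. Fourth, invoke Theorem~\ref{thm:MSfromOA} to conclude that $\cS$ is an \textup{MS}$(2,k,\Z_2^{(k-1)k} \times \Z_{k+1})$, which is exactly the existence statement the corollary requires.

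There is really no main obstacle here beyond bookkeeping: the covering property of the 2-subsets and the lower bound $d \geq 2(k-2)+1$ on the Hamming distance, which together constitute the nontrivial content of an \textup{MS}$(2,k,\cdot)$, were already verified inside the proof of Theorem~\ref{thm:MSfromOA} using the defining property of the orthogonal array. If anything deserves a brief comment in the write-up, it is only the degenerate check that plugging $r=k-1$ into the exponent $k-r$ in $\Z_{k+1}^{k-r}$ gives the single factor $\Z_{k+1}$ appearing in the corollary's statement, so the alphabet matches verbatim.
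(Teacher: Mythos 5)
Your proposal is correct and matches the paper's (implicit) argument exactly: the corollary is stated without proof precisely because it is the immediate existence consequence of applying Construction~\ref{const:fromOA} with $r=k-1$ and invoking Theorem~\ref{thm:MSfromOA}. Your extra bookkeeping about the admissible range of $r$ and the exponent $k-r=1$ is harmless and accurate.
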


\begin{theorem}
\label{thm:fromOA}
If $1 \leq r < k-1$, then
the system $\cS$ of Construction~\ref{const:fromOA} is a \textup{GDD}$(2,k,rk+k(k-r)$ of type $1^{rk} k^{k-r}$ and as a code its minimum distance
is $d=k+r-2$.
\end{theorem}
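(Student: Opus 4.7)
The plan is to handle the two assertions of the theorem separately, using the orthogonal array structure throughout. For the GDD claim, I would repeat the coverage analysis from the proof of Theorem~\ref{thm:MSfromOA}, noting that here the groups are the $rk$ singletons corresponding to binary coordinates together with the $k-r$ size-$k$ groups corresponding to the $\Z_{k+1}$ coordinates. Pairs living inside a single binary sub-block are covered uniquely by a block of $B_1$. Every remaining admissible $2$-subset --- whether across two binary sub-blocks, across a binary sub-block and a $\Z_{k+1}$ group, or across two distinct $\Z_{k+1}$ groups --- is covered exactly once by a block of $B_2$, because the OA$(2,k,k)$ property guarantees that each ordered pair of alphabet values appears exactly once in the projection onto any two columns of $\cA$. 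The only novelty beyond the $r=k-1$ proof is that this last argument must be run for several pairs of $\Z_{k+1}$ columns.

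For the minimum distance I would compute the three possible pairwise distances. Two distinct blocks of $B_1$ have disjoint supports of size $k$ and are thus at distance $2k$. A $B_1$ block concentrated on positions $ik+1,\ldots,ik+k$ against any $B_2$ block agrees in exactly one of those positions (the unique $1$ the $B_2$ word places in sub-block $i$) and disagrees in the remaining $k-1$; in each of the other $r-1$ binary sub-blocks the $B_2$ word contributes one more disagreement, and in the $k-r$ mixed positions it contributes $k-r$ more, for a total of $(k-1)+(r-1)+(k-r)=2k-2$. Finally, two $B_2$ blocks coming from rows $x,y$ of $\cA$ are at distance $2(r-s) + (k-r-t) = k + r - 2s - t$, where $s$ and $t$ count the agreements of $x$ and $y$ in the first $r$ and last $k-r$ coordinates respectively.

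The key step is bounding $s+t$: since $\cA$ is an OA$(2,k,k)$, two distinct rows cannot agree in two coordinates (an agreement in columns $i,j$ on values $a,b$ would make $(a,b)$ occur twice in the $(i,j)$-projection), so $s+t\le 1$, which gives a distance of at least $k+r-2$. To see this bound is tight, I would observe that among the $k$ rows taking any fixed value in column $1$, the OA property forces all other $k-1$ columns to be a permutation of the alphabet, so any two such rows agree in column $1$ alone; since $r\ge 1$, this gives the $s=1$, $t=0$ case. Combining with the hypothesis $r<k-1$, one has $k+r-2 < 2k-3 < 2k-2 < 2k$, so the overall minimum distance is exactly $k+r-2$. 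The main obstacle in this plan is really just the correct bookkeeping of agreements and disagreements in the $B_2$-$B_2$ case; everything else is a direct application of the OA axiom.
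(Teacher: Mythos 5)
Your proposal is correct and follows essentially the same route as the paper: coverage of the admissible pairs via the projection property of the OA$(2,k,k)$, and the minimum distance via a case analysis on how many columns two distinct rows of $\cA$ can agree in (equivalently, the paper's split according to $d(x_2,y_2)\in\{k-r-1,k-r\}$). You are in fact somewhat more thorough than the paper, which leaves the $B_1$--$B_1$ and $B_1$--$B_2$ distances and the tightness of the bound $k+r-2$ implicit.
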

\begin{proof}
In a similar way to the proof of Theorem~\ref{thm:MSfromOA}
it easily verified that each $2$-subset of $\Z_2^{rk} \times \Z_{k+1}^{k-r}$ is contained in exactly one codeword ($k$-subset)
(only one more case of pairs in the last $k-r$ coordinates have to be considered).
Consider two codewords $x=(x_1,x_2)$ and $y=(y_1,y_2)$, where $x_1$ and $y_1$ have length $rk$ and $x_2$ and $y_2$ have length $k-r$.
If $d(x_2,y_2)=k-r-1$ then in the first $rk$ positions of $x_1$ and $y_1$ all the \emph{ones} are in
different positions and hence $d(x_1,y_1) =2r$ which implies that $d(x,y)=k+r-1$.
If $d(x_2,y_2)=k-r$ then at most one \emph{one} in the first $rk$ positions of $x_1$ and $y_1$ is in the same position and
hence $d(x_1,y_1) \geq 2(r-1)$ and hence $d(x,y)=k+r-2$.
Thus, $\cS$ is a GDD$(2,k,rk+k(k-r))$ of type $1^{rk} k^{k-r}$ and as a code its minimum distance is $d = k+r-2$.
\end{proof}

\begin{corollary}
\label{cor:fromOA}
If there exists an \textup{OA}$(2,k,k)$ then, for each $r$, $1 \leq r < k-1$, there exists a \textup{GDD}$(2,k,rk+k(k-r))$ of type $1^{rk} k^{k-r}$,
with minimum distance $d=k+r-2$.
\end{corollary}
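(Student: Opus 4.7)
The plan is to invoke the machinery that has already been set up. Given an orthogonal array \textup{OA}$(2,k,k)$ and an integer $r$ with $1 \leq r < k-1$, I would feed these inputs directly into Construction~\ref{const:fromOA} to produce the explicit system $\cS$ on the point set $\Z_2^{rk} \times \Z_{k+1}^{k-r}$, whose blocks are the union $B_1 \cup B_2$ defined there.

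Once $\cS$ is in hand, Theorem~\ref{thm:fromOA} is stated for exactly this range of $r$ and asserts that $\cS$ is a \textup{GDD}$(2,k,rk+k(k-r))$ of type $1^{rk}k^{k-r}$ whose minimum Hamming distance, viewed as a code, equals $k+r-2$. The corollary therefore follows immediately, with no further work beyond checking that the hypothesis matches that of the theorem.

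Since this is a direct deduction there is no real obstacle to overcome. The only thing worth confirming is that the existence of \textup{OA}$(2,k,k)$ is indeed the only combinatorial input required: Construction~\ref{const:fromOA} uses no other object, and the proof of Theorem~\ref{thm:fromOA} relies only on the pair-covering property of the orthogonal array together with elementary case analysis of the Hamming distance between two codewords coming from $B_1 \cup B_2$. Both ingredients are already established, so the corollary's proof amounts to a one-line citation of the preceding theorem applied to the given \textup{OA}$(2,k,k)$ and the chosen value of $r$.
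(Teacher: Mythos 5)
Your proposal is correct and matches the paper's intent exactly: the corollary carries no separate proof in the paper because it is an immediate consequence of applying Construction~\ref{const:fromOA} to the given \textup{OA}$(2,k,k)$ and invoking Theorem~\ref{thm:fromOA}, which is precisely what you do. Nothing further is needed.
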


\begin{remark}
Codes with the parameters given in Theorem~\ref{thm:fromOA} and Corollary~\ref{cor:fromOA} cannot have a better minimum
distance than the one mentioned in these results.
\end{remark}

\section{GDDs from Large sets}
\label{sec:GDDfromLS}

This section is devoted to present GDD$(t,t+1,m)$s with new parameters, where $t  >3$. Only one family presented in~\cite{LKRS}
with $t=4$ was known and in this section we show that there exists many such families and even some with $t>4$.
The following proposition that was proved in~\cite{LKRS} is the key to form a GDD$(t,t+1,m)$ with $t > 3$.

\begin{proposition}
\label{prop:LKRS}
There exists a large set of \textup{GDD}$(t,t+1,gn)$ of type $g^n$ (which is an \textup{LH}$(n,g,t+1,t)$) if and only if there
exists a \textup{GDD}$(t+1,t+2,ng+h)$ of type $g^n h^1$, where $h=g(n-t)$.
\end{proposition}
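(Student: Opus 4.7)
The plan is to set up a concrete bijection between large sets of H-designs on the left side and the prescribed GDDs on the right. The first step, which motivates the hypothesis $h=g(n-t)$, is a routine counting: the number of $(t+1)$-subsets of an $ng$-point set meeting each of the $n$ groups of size $g$ in at most one point is $\binom{n}{t+1}g^{t+1}$, while each H$(n,g,t+1,t)$ contains $\binom{n}{t}g^{t}/(t+1)$ blocks, so any large set must consist of exactly $(t+1)\binom{n}{t+1}g/\binom{n}{t}=g(n-t)=h$ copies. This is precisely the size of the extra group in the target GDD, which is the first clue that the two structures must be in bijection.

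For the forward direction, I would label the $h$ copies of H$(n,g,t+1,t)$ in the large set by the points of the special group $G_\infty$ of size $h$, and form blocks $B\cup\{i\}$ for every $(t+1)$-block $B$ in the $i$-th copy. The resulting $(t+2)$-subsets meet every group in at most one point (the H-design supplies this for the $n$ original groups, and each block contains exactly one point of $G_\infty$ by construction). I then check the covering property for a $(t+1)$-subset $S$ meeting each group at most once by splitting on whether $S\subseteq X\setminus G_\infty$ (use the large-set partition to locate the unique copy containing $S$) or $S=T\cup\{i\}$ with $i\in G_\infty$ (use that the $i$-th copy is an H$(n,g,t+1,t)$ and hence covers the $t$-subset $T$ exactly once).

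For the converse, I would, for each $i\in G_\infty$, define a candidate H-design $\cH_i$ whose blocks are $B\setminus\{i\}$ for all GDD-blocks $B$ containing $i$; the $(t+1)$-strength of the GDD shows immediately that $\cH_i$ is an H$(n,g,t+1,t)$. The main obstacle, and the only place where the equation $h=g(n-t)$ is genuinely needed, is showing that the family $\{\cH_i:i\in G_\infty\}$ partitions all $(t+1)$-subsets of $X\setminus G_\infty$ meeting each group at most once. I would handle this by a double count: let $E$ be the number of GDD-blocks containing a point of $G_\infty$ and $P$ the number of blocks contained in $X\setminus G_\infty$. Counting the $h\binom{n}{t}g^t$ ``Case B'' $(t+1)$-subsets (one point in $G_\infty$, $t$ points in distinct original groups), each extended block covers $t+1$ of them, forcing $E=g^{t+1}\binom{n}{t+1}$. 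Counting the $\binom{n}{t+1}g^{t+1}$ ``Case A'' $(t+1)$-subsets (all points in distinct original groups), one gets $E+(t+2)P=\binom{n}{t+1}g^{t+1}$, and substituting the value of $E$ obtained above gives $P=0$ exactly when $h=g(n-t)$. Thus every GDD-block meets $G_\infty$, every Case A $(t+1)$-subset sits in a unique extended block and hence in a unique $\cH_i$, and the $\cH_i$ form the required large set.
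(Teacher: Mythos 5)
The paper does not actually prove this proposition --- it is quoted from the reference [LKRS] with no argument given --- so there is no in-paper proof to compare against. Your proof is correct and complete: the counting that forces exactly $g(n-t)$ copies in any large set, the labelling of copies by the points of the long group to build the $(t+2)$-blocks, the derived designs $\cH_i$ in the converse, and the double count showing that no block can avoid the long group are all the standard (and, as far as I can tell, the only natural) way to establish this equivalence. The one phrase I would soften is that the Case A count gives $P=0$ ``exactly when $h=g(n-t)$'': for the converse you only need the implication that $h=g(n-t)$ forces $P=0$, which your substitution $E=g^{t+1}\binom{n}{t+1}$ into $E+(t+2)P=\binom{n}{t+1}g^{t+1}$ does deliver; the reverse implication is a separate (easy) nonexistence statement that the proposition does not ask for.
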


A sequence of large sets of Steiner systems with multiplicity and as a consequence large sets of H-designs based
on Proposition~\ref{prop:LS_LH} were constructed in~\cite{EtZh21}. The parameters of these large sets
are given in the following propositions. All these propositions are followed by corollaries based on
Proposition~\ref{prop:LKRS} that present parameters of new nonuniform GDDs with $t \geq 4$ and some with $t > 4$.
While these parameters are based on well-known results they are very desired parameters which were not known before.

\begin{proposition}
For each $g \geq 2$ there exist an \textup{LS}$(3,4,10;g)$ and an \textup{LH}$(10,g,4,3)$.
\end{proposition}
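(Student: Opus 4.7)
The plan is to handle the two existence claims in sequence: first construct an LS$(3,4,10;g)$, and then derive the LH$(10,g,4,3)$ from it using Proposition~\ref{prop:LS_LH}.

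For the LS$(3,4,10;g)$, my starting point is the classical large set LS$(3,4,10;1)$ of Steiner quadruple systems of order $10$, whose existence follows from the theorem of Lu on large sets of Steiner quadruple systems (completed by Teirlinck); alternatively, one can use the construction given in~\cite{EtZh21}. Since the definition of LS$(t,k,n;\lambda)$ admits a collection (in particular, a multiset) of Steiner systems, stacking $g$ copies of this classical large set yields a collection over the same $10$-set in which every $4$-subset appears in exactly $g$ blocks, which is by definition an LS$(3,4,10;g)$.

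For the LH$(10,g,4,3)$, I would apply Proposition~\ref{prop:LS_LH} with $t=3$, $k=4$, $n=10$. That proposition requires an OA$(3,4,g)$ and an LS$(3,4,10;g^{k-t})=\textup{LS}(3,4,10;g)$. The latter is exactly what was produced in the previous paragraph. The former exists for every $g \geq 2$ by the earlier proposition asserting that an OA$(t-1,t,k)$ exists for all $t,k \geq 2$, specialized to $t=4$ and $k=g$. Proposition~\ref{prop:LS_LH} then directly delivers the desired large set of LH$(10,g,4,3)$.

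The main obstacle in this plan is the base case LS$(3,4,10;1)$: this is a classical but genuinely nontrivial result, and I would either cite it or reproduce the known construction rather than attempt to rebuild it from scratch. Everything past that point, namely the multiset-repetition step to boost the multiplicity from $1$ to $g$, the trivial existence of OA$(3,4,g)$, and the standard OA-based reduction encoded in Proposition~\ref{prop:LS_LH}, is routine and requires no further combinatorial input.
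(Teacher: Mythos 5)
The second half of your argument is sound and matches the paper's intent: given LS$(3,4,10;g)$, an OA$(3,4,g)$ exists for every $g\geq 2$ by the proposition on OA$(t-1,t,k)$, and Proposition~\ref{prop:LS_LH} with $t=3$, $k=4$, $n=10$ then yields the LH$(10,g,4,3)$. (The paper itself offers no proof of this proposition; it simply quotes the parameters from~\cite{EtZh21}, so your derivation of the LH from the LS is exactly the intended reading.)

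The genuine gap is your base case. There is no ``theorem of Lu, completed by Teirlinck'' for large sets of Steiner \emph{quadruple} systems; that theorem concerns large sets of Steiner \emph{triple} systems, i.e.\ LS$(2,3,n;1)$. For $t>2$ no nontrivial large set of multiplicity $1$ is known --- the paper states this explicitly in Section~\ref{sec:preliminaries} (``No non-trivial such large set was constructed for $t > 2$''). In particular an LS$(3,4,10;1)$, which would partition the $\binom{10}{4}=210$ quadruples of a $10$-set into $7$ pairwise disjoint copies of S$(3,4,10)$, is not available, so your plan of stacking $g$ copies of it cannot get off the ground. Indeed, the restriction to $g\geq 2$ in the statement is itself the tell: if the $\lambda=1$ case existed, the proposition would hold for all $g\geq 1$. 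The construction in~\cite{EtZh21} instead builds large sets of small multiplicity $\lambda\geq 2$ directly (collections of SQS$(10)$s covering every quadruple exactly $\lambda$ times, without passing through $\lambda=1$) and then reaches every $g\geq 2$ by taking nonnegative integer combinations of the available multiplicities (e.g.\ $g=2a+3b$). Your multiset-stacking observation is a valid way to \emph{combine} multiplicities, but the seeds must be genuine constructions at $\lambda\in\{2,3\}$, not a citation to a theorem about triple systems.
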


\begin{corollary}
For each $g \geq 2$ there exists a \textup{GDD}$(4,5,10g +7g)$ of type $g^{10} (7g)^1$.
\end{corollary}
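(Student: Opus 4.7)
The proof plan is essentially a one-line invocation of Proposition~\ref{prop:LKRS} combined with the immediately preceding proposition, so the bulk of the write-up amounts to verifying that the parameters line up correctly.

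First I would specialize Proposition~\ref{prop:LKRS} to the case $t=3$ and $n=10$. Reading off the statement, this says that an \textup{LH}$(10,g,4,3)$ (i.e.\ a large set of \textup{GDD}$(3,4,10g)$ of type $g^{10}$) exists if and only if a \textup{GDD}$(4,5,10g+h)$ of type $g^{10}h^1$ exists, where $h=g(n-t)=g(10-3)=7g$. Thus the conclusion we want is exactly the ``only if'' direction of the equivalence for $(t,n)=(3,10)$.

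Next I would invoke the preceding proposition, which guarantees the existence of an \textup{LH}$(10,g,4,3)$ for every $g\geq 2$. Feeding this into the equivalence above immediately yields a \textup{GDD}$(4,5,10g+7g)$ of type $g^{10}(7g)^1$, which is the claim.

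There is no genuine obstacle here: the only things to check are (i) that $t=3$, $n=10$, and the hypothesis $g\geq 2$ match the two propositions being chained, and (ii) that the arithmetic $h=g(n-t)=7g$ yields precisely the type $g^{10}(7g)^1$ and the parameter $10g+7g$ stated in the corollary. Since both alignments are immediate, the whole proof reduces to a single sentence applying Proposition~\ref{prop:LKRS} to the large set given by the previous proposition.
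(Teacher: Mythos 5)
Your proposal is correct and matches the paper's intended argument exactly: the paper derives this corollary by chaining the preceding proposition (existence of an \textup{LH}$(10,g,4,3)$ for all $g\geq 2$) with Proposition~\ref{prop:LKRS} specialized to $t=3$, $n=10$, giving $h=g(10-3)=7g$. Your parameter check is the whole content of the proof, and it is accurate.
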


\begin{proposition}
For each $g \geq 2$, there exist an \textup{LS}$(4,5,11;g)$, an \textup{LS}$(5,6,12;g)$, an \textup{LH}$(11,g,5,4)$, and an \textup{LH}$(12,g,6,5)$, with
possible exceptions when $g \in \{3,5,7,9,11,13\}$.
\end{proposition}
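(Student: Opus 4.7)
The plan is to reduce the statement to the existence of the two large sets \textup{LS}$(4,5,11;g)$ and \textup{LS}$(5,6,12;g)$, which I would cite from~\cite{EtZh21}, and then obtain the two LH-designs as an immediate consequence of Proposition~\ref{prop:LS_LH}.

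First I would handle the ingredients needed by Proposition~\ref{prop:LS_LH}. To produce an \textup{LH}$(11,g,5,4)$ I need an \textup{OA}$(4,5,g)$ and an \textup{LS}$(4,5,11;g^{5-4}) = \text{LS}(4,5,11;g)$; to produce an \textup{LH}$(12,g,6,5)$ I need an \textup{OA}$(5,6,g)$ and an \textup{LS}$(5,6,12;g^{6-5}) = \text{LS}(5,6,12;g)$. Both required orthogonal arrays are of the form \textup{OA}$(t-1,t,g)$ and hence exist for every $g \geq 2$ by the proposition on orthogonal arrays cited in the Preliminaries section. Thus the OA ingredients come for free and do not generate any exceptions.

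Next I would cite the construction in~\cite{EtZh21}, where these specific large sets of Steiner systems with multiplicity are built from the Witt designs S$(4,5,11)$ and S$(5,6,12)$ (which have the right block counts, $66$ and $132$ respectively, to make $7g$ copies cover each $k$-subset exactly $g$ times). The constructions there work for every $g \geq 2$ except possibly $g \in \{3,5,7,9,11,13\}$; the exceptional set is inherited verbatim from the cited results and is the source of the possible exceptions in our conclusion. Plugging an \textup{LS}$(4,5,11;g)$ and an \textup{OA}$(4,5,g)$ into Proposition~\ref{prop:LS_LH} with $(t,k,n)=(4,5,11)$ yields the \textup{LH}$(11,g,5,4)$, and the analogous application with $(t,k,n)=(5,6,12)$ yields the \textup{LH}$(12,g,6,5)$.

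The main obstacle is really external to this proposition: it lies in the recursive large-set constructions of~\cite{EtZh21}, which is where the small odd exceptional values $\{3,5,7,9,11,13\}$ arise (they correspond to cases where the recursive seeding with Witt-design-based ingredients breaks down). Within the present paper the proof is essentially a clean application of Proposition~\ref{prop:LS_LH} together with the trivial existence of \textup{OA}$(t-1,t,g)$, so once the LS results are invoked the two LH results follow immediately, giving all four claims in a single stroke.
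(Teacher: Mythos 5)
Your proposal is correct and matches the paper's approach: the paper gives no proof of this proposition at all, simply citing~\cite{EtZh21} for the large sets with multiplicity and invoking Proposition~\ref{prop:LS_LH} (together with the trivial existence of \textup{OA}$(t-1,t,g)$) to pass from the LS-designs to the corresponding LH-designs, exactly as you do. Your block-count check ($66$ and $132$ blocks, hence $7g$ copies in each large set) and the observation that the exceptional set $\{3,5,7,9,11,13\}$ is inherited verbatim from the cited source are both consistent with the paper.
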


\begin{corollary}
For each $g \geq 2$ there exists a \textup{GDD}$(5,6,11g+7g)$ of type $g^{11} (7g)^1$ and there exists
a \textup{GDD}$(6,7,12g+7g)$ of type $g^{12} (7g)^1$, with possible exceptions when $g \in \{3,5,7,9,11,13\}$.
\end{corollary}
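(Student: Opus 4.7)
The proof is a direct application of Proposition~\ref{prop:LKRS} to the two large sets of H-designs provided by the previous proposition. I would treat the two claimed GDDs separately, though the calculation is formally identical in structure.

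For the first claim, the previous proposition guarantees, for every $g \geq 2$ outside the exceptional set, the existence of an \textup{LH}$(11,g,5,4)$. I would place this in the framework of Proposition~\ref{prop:LKRS} by identifying the parameters $n=11$ and $t=4$ (so $t+1=5$, and the ground set has $ng=11g$ points partitioned into $n=11$ groups of size $g$). The proposition then produces a \textup{GDD}$(t+1,t+2,ng+h)=$ \textup{GDD}$(5,6,11g+h)$ of type $g^{11}h^1$, where the hole size is forced to be $h = g(n-t) = g(11-4) = 7g$. This matches the parameter set asserted in the first half of the corollary.

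For the second claim, I would apply the same mechanism to the \textup{LH}$(12,g,6,5)$ guaranteed by the previous proposition, now with $n=12$ and $t=5$. Proposition~\ref{prop:LKRS} then produces a \textup{GDD}$(6,7,12g+h)$ of type $g^{12}h^1$, with $h = g(12-5) = 7g$, matching the second half of the corollary.

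The exceptional set $\{3,5,7,9,11,13\}$ carries over verbatim from the hypothesis of the previous proposition: the input large sets are only guaranteed for $g$ outside this set, so the derived GDDs are only guaranteed there as well. There is no genuine obstacle here; the only substantive step is verifying that $n-t=7$ in both cases, which is precisely why the hole size $7g$ appears in both resulting GDDs.
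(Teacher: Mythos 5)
Your proposal is correct and is exactly the derivation the paper intends: the corollary follows by feeding the LH$(11,g,5,4)$ and LH$(12,g,6,5)$ from the preceding proposition into Proposition~\ref{prop:LKRS} with $(n,t)=(11,4)$ and $(12,5)$ respectively, giving $h=g(n-t)=7g$ in both cases, with the exceptional set inherited from the hypothesis.
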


\begin{proposition}
There exists an \textup{LH}$(7,g,4,3)$ if and only if $g$ is even.
\end{proposition}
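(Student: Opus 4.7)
The plan is to establish both directions of the equivalence. For necessity, I would note that any $\textup{LH}(7,g,4,3)$ contains at least one $\textup{H}(7,g,4,3)$, which is a $\textup{GDD}(3,4,7g)$ of type $g^7$. The number of transversal $3$-subsets (those meeting each group in at most one point) is $\binom{7}{3}g^3 = 35g^3$, and each block of size $4$ covers $\binom{4}{3}=4$ such triples. Hence the block count $35g^3/4$ must be an integer, which forces $4 \mid g^3$, i.e., $g$ must be even.

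For sufficiency, I first observe that Proposition~\ref{prop:LS_LH} cannot be invoked directly here: it would demand an $\textup{LS}(3,4,7;g)$, but an $\textup{S}(3,4,7)$ does not exist since $\binom{7}{3}/\binom{4}{3}=35/4 \notin \Z$. Hence no large set of Steiner systems on seven points of this form is available, and the $\textup{LH}(7,g,4,3)$ must be produced by a different route. I would split the sufficiency argument into two stages. First, I would construct an $\textup{LH}(7,2,4,3)$ from scratch: it must partition the $\binom{7}{4}\cdot 2^4=560$ transversal $4$-subsets of a $14$-point set (with $7$ groups of size $2$) into exactly eight disjoint $\textup{H}(7,2,4,3)$ designs, each with $70$ blocks. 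A natural attempt is to view the point set as $\Z_7\times\Z_2$ and search for a short list of base blocks whose orbits under a suitable automorphism group yield the partition.

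Second, for any even $g=2m$ with $m\geq 2$, I would inflate the base case: replace each point of the $14$-point $\textup{LH}(7,2,4,3)$ by a set of $m$ new points, and expand each block $\{a,b,c,d\}$ into $m^3$ transversal blocks dictated by the rows of an $\textup{OA}(3,4,m)$, which exists for every $m\geq 2$ by the proposition stated earlier. A straightforward verification then shows that the expanded designs still partition all transversal $4$-subsets of the resulting $14m=7g$-point set with the required group structure of size $g$, giving an $\textup{LH}(7,g,4,3)$.

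The main obstacle is the base case $g=2$, which is not a consequence of any construction already appearing in the paper; once an explicit $\textup{LH}(7,2,4,3)$ is exhibited (or quoted from the existing large-set literature), the multiplicative expansion step and the elementary divisibility argument for necessity are essentially routine.
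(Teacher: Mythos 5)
The paper gives no proof of this proposition at all: it is quoted from \cite{EtZh21} (the surrounding text says the parameters of these large sets were constructed there), so there is no argument in the paper to compare against line by line. Judged on its own, your necessity direction is correct and elementary: an \textup{H}$(7,g,4,3)$ must have $\binom{7}{3}g^3/\binom{4}{3}=35g^3/4$ blocks, and $4\mid g^3$ forces $g$ even. Your observation that Proposition~\ref{prop:LS_LH} is unavailable here because \textup{S}$(3,4,7)$ does not exist is also a sound remark. The sufficiency direction, however, contains a genuine error and a genuine omission.

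The error is in the inflation step. Expanding each block of the eight \textup{H}$(7,2,4,3)$'s through a single \textup{OA}$(3,4,m)$ produces eight designs with $70m^3$ blocks each, hence $560m^3$ blocks in total, while the inflated point set has $\binom{7}{4}(2m)^4=560m^4$ transversal $4$-subsets. For $m\geq 2$ these cannot be partitioned by your eight designs; a large set for $g=2m$ must consist of $8m$ copies of \textup{H}$(7,2m,4,3)$, not $8$. So the ``straightforward verification'' you invoke cannot succeed as stated. The repair is standard but must be said: replace the single \textup{OA}$(3,4,m)$ by a partition of all of $\Z_m^4$ into $m$ pairwise disjoint \textup{OA}$(3,4,m)$'s (e.g.\ the level sets of $x_1+x_2+x_3+x_4 \bmod m$), so that each original \textup{H}$(7,2,4,3)$ inflates into $m$ disjoint \textup{H}$(7,2m,4,3)$'s and the $8m$ resulting designs partition everything. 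The omission is the base case: you describe a search for an \textup{LH}$(7,2,4,3)$ over $\Z_7\times\Z_2$ but do not exhibit one or cite one, and you concede as much. Since every even $g$ in your scheme is reached by inflating $g=2$, the whole sufficiency direction rests on an object that is never produced. As it stands the proposal proves only the ``only if'' half.
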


\begin{corollary}
There exists a \textup{GDD}$(4,5,7g+4g)$ of type $g^7 (4g)^1$ if and only if $g$ is even.
\end{corollary}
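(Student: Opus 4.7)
The plan is to derive this corollary as an immediate consequence of the preceding proposition on the existence of \textup{LH}$(7,g,4,3)$ combined with Proposition~\ref{prop:LKRS}. Since Proposition~\ref{prop:LKRS} gives an "if and only if" equivalence between large sets of H-designs and a particular nonuniform GDD, and the preceding proposition characterizes exactly when \textup{LH}$(7,g,4,3)$ exists, the two biconditionals chain together to yield the claimed characterization.

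First I would instantiate Proposition~\ref{prop:LKRS} with the parameters $t = 3$ and $n = 7$. The proposition then states that a large set of \textup{GDD}$(3,4,7g)$ of type $g^7$ --- that is, an \textup{LH}$(7,g,4,3)$ --- exists if and only if there exists a \textup{GDD}$(4,5,7g+h)$ of type $g^7 h^1$, where $h = g(n-t) = g(7-3) = 4g$. Substituting gives the equivalence between \textup{LH}$(7,g,4,3)$ and \textup{GDD}$(4,5,7g+4g)$ of type $g^7 (4g)^1$, which is exactly the design considered in the corollary.

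Next, I would invoke the preceding proposition, which asserts that \textup{LH}$(7,g,4,3)$ exists if and only if $g$ is even. Combining the two biconditionals yields: \textup{GDD}$(4,5,7g+4g)$ of type $g^7 (4g)^1$ exists if and only if $g$ is even, which is precisely the statement of the corollary.

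Because both equivalences are quoted as already-established results, there is essentially no obstacle in the proof; the only care needed is to correctly substitute $t=3$, $n=7$ into the formula $h = g(n-t)$ and verify that the resulting design parameters match those in the corollary's statement. Thus the proof is a two-line chain of biconditionals with no additional combinatorial work required.
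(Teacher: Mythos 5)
Your proposal is correct and is exactly the derivation the paper intends: the corollary is stated without proof as an immediate consequence of Proposition~\ref{prop:LKRS} (instantiated with $t=3$, $n=7$, so $h=g(n-t)=4g$) chained with the biconditional of the preceding proposition on \textup{LH}$(7,g,4,3)$. The parameter substitution checks out, and since both cited results are genuine equivalences, the ``if and only if'' in the corollary is fully justified.
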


\begin{proposition}
There exist an \textup{LS}$(3,4,14;720)$ and an \textup{LH}$(14,720,4,3)$.
\end{proposition}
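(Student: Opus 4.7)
The plan is to reduce the \textup{LH} claim to the \textup{LS} claim via Proposition~\ref{prop:LS_LH}, then exhibit the \textup{LS} by scaling a classical large set of Steiner quadruple systems. Plugging $t=3$, $k=4$, $n=14$, $g=720$ into Proposition~\ref{prop:LS_LH}, the required ingredients are an \textup{OA}$(3,4,720)$ and an \textup{LS}$(3,4,14;720^{k-t})=\textup{LS}(3,4,14;720)$. The \textup{OA}$(3,4,720)$ is immediate from the earlier proposition which supplies an \textup{OA}$(s-1,s,m)$ for every $s,m \geq 2$ (instantiate with $s=4$, $m=720$). Hence the whole statement reduces to producing an \textup{LS}$(3,4,14;720)$.

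For the \textup{LS} part, I would use the fact that a Steiner quadruple system $\textup{S}(3,4,14)$ exists since $14 \equiv 2 \pmod 6$, and that a partition of the $\binom{14}{4}=1001$ quadruples into $11$ pairwise disjoint copies of $\textup{S}(3,4,14)$, i.e., an \textup{LS}$(3,4,14;1)$, is available from the classical literature on large sets of Steiner quadruple systems. Taking $720$ disjoint copies of such a large set (formally, a $720$-fold multiset of it) then delivers an \textup{LS}$(3,4,14;720)$: every 4-subset of the 14-element ground set lies in exactly one $\textup{S}(3,4,14)$ per copy, hence in precisely $720$ systems overall. Feeding this \textup{LS} together with the \textup{OA}$(3,4,720)$ into Proposition~\ref{prop:LS_LH} produces the required \textup{LH}$(14,720,4,3)$.

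The main obstacle is justifying the existence of an \textup{LS}$(3,4,14;1)$: large sets of Steiner quadruple systems are delicate, and the citation for the specific order $n=14$ needs to be pinned down. Should the literature not cover $n=14$ directly with multiplicity $1$, the fallback is to construct \textup{LS}$(3,4,14;720)$ from scratch via the recursive multiplicity-increasing techniques of~\cite{EtZh21}; the value $720 = 6!$ is compatible with such recursions, where the multiplicity accumulates as a product of small factorial-type factors contributed by orthogonal arrays over smaller alphabets, which is why this particular pair $(n,\lambda)=(14,720)$ lies in reach while the general $\lambda$ problem for $n=14$ does not.
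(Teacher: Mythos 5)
Your overall architecture is the right one and matches the paper's framework: the \textup{LH}$(14,720,4,3)$ is indeed meant to follow from Proposition~\ref{prop:LS_LH} applied to an \textup{LS}$(3,4,14;720)$ together with an \textup{OA}$(3,4,720)$, and the latter is correctly supplied by the proposition guaranteeing an \textup{OA}$(t-1,t,k)$ for all $t,k\geq 2$. (The paper itself offers no argument at all for this proposition; it is imported verbatim from~\cite{EtZh21}, so any honest attempt must ultimately lean on that reference for the \textup{LS} ingredient.)

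The genuine gap is in your construction of the \textup{LS}$(3,4,14;720)$. You propose to take $720$ copies of an \textup{LS}$(3,4,14;1)$, i.e., a partition of all $1001$ quadruples of a $14$-set into $11$ pairwise disjoint Steiner quadruple systems \textup{S}$(3,4,14)$. No such object is known to exist: large sets of Steiner quadruple systems are a long-standing open problem, and the paper states explicitly in Section~\ref{sec:preliminaries} that ``no non-trivial such large set was constructed for $t>2$.'' This is precisely why the notion of a large set \emph{with multiplicity} is introduced: the multiplicity $720$ in \textup{LS}$(3,4,14;720)$ is intrinsic to the construction in~\cite{EtZh21} (the collection consists of many distinct, mutually overlapping copies of \textup{S}$(3,4,14)$ covering each quadruple exactly $720$ times), not a $720$-fold replication of a multiplicity-one large set. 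Your fallback sentence --- appeal to the recursive techniques of~\cite{EtZh21} --- is not a proof but a deferral to the very citation the paper relies on, and the heuristic that $720=6!$ ``accumulates as a product of factorial-type factors'' is not substantiated. So the \textup{LH} reduction stands, but the core existence claim for the \textup{LS} is unproved as written, and the primary route you give for it would require solving an open problem.
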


\begin{corollary}
There exists a \textup{GDD}$(4,5,14 \cdot 720+11 \cdot 720)$ of type $720^{14} (11 \cdot 720)^1$.
\end{corollary}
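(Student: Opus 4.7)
The plan is to invoke Proposition~\ref{prop:LKRS} directly, taking as input the LH-design furnished by the preceding proposition. Specifically, I would set $t=3$, $g=720$, and $n=14$ in the statement of Proposition~\ref{prop:LKRS}. The preceding proposition asserts the existence of an \textup{LH}$(14,720,4,3)$, which by definition is a large set of \textup{H}$(14,720,4,3)$, i.e., a large set of \textup{GDD}$(3,4,14\cdot 720)$ of type $720^{14}$. This matches exactly the hypothesis of Proposition~\ref{prop:LKRS} with the chosen parameters.

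Next, I would compute the value $h=g(n-t)$ predicted by Proposition~\ref{prop:LKRS}: with $g=720$, $n=14$, $t=3$, we get $h=720\cdot (14-3)=11\cdot 720$. Proposition~\ref{prop:LKRS} then yields a \textup{GDD}$(t+1,t+2,ng+h)$ of type $g^n h^1$, which for our parameters is a \textup{GDD}$(4,5,14\cdot 720+11\cdot 720)$ of type $720^{14}(11\cdot 720)^1$, precisely the statement of the corollary.

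There is no serious obstacle here; the result is essentially a bookkeeping exercise matching the parameters of the preceding proposition with those demanded by Proposition~\ref{prop:LKRS}. The only thing worth double-checking is the direction of the equivalence in Proposition~\ref{prop:LKRS} (we need the ``only if'' direction, deducing the \textup{GDD}$(4,5,\cdot)$ from the existence of the large set), and the arithmetic identity $h=g(n-t)=720\cdot 11$ that fixes the exceptional group size. Both are immediate, so the proof will consist essentially of a one-line application of Proposition~\ref{prop:LKRS} to the preceding proposition.
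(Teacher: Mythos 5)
Your proposal is correct and matches the paper's (implicit) argument exactly: the paper states that all corollaries in this section follow from Proposition~\ref{prop:LKRS} applied to the preceding propositions, which is precisely your application with $t=3$, $g=720$, $n=14$, and $h=g(n-t)=11\cdot 720$. Nothing is missing.
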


\begin{proposition}
There exists an \textup{LH}$(5,4h,4,3)$ for any positive integer $h$.
\end{proposition}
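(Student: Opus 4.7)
The plan is a two-stage construction: first establish the base case $h=1$ by an explicit construction of LH$(5,4,4,3)$, then lift to arbitrary $h$ by an orthogonal-array blow-up in the spirit of Proposition~\ref{prop:LS_LH}.

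In the first stage, I would exhibit LH$(5,4,4,3)$ directly: a partition of all $5\cdot 4^4=1280$ transversal $4$-subsets of a point set with $5$ groups of size $4$ into $8$ disjoint H$(5,4,4,3)$s, each with $5\cdot 4^3/2=160$ blocks. A natural framework identifies each group with $\Z_4$ (or $\Z_2\times\Z_2$) and uses the additive action on the second coordinate combined with a cyclic action on the $5$ group labels, so that only a small number of orbit representatives need to be listed; coverage of each $3$-subset within a single H-design and mutual disjointness across the $8$ H-designs then reduce to a finite case check.

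In the second stage, for each $\ell\in\Z_h$ let
\[
D_\ell \eqdef \mathset{(y_1,y_2,y_3,y_4)\in\Z_h^4 \,:\, y_1+y_2+y_3+y_4\equiv \ell \pmod{h}}.
\]
Any three coordinates determine the fourth, so each $D_\ell$ is an \textup{OA}$(3,4,h)$; moreover $\{D_\ell\}_{\ell\in\Z_h}$ partitions $\Z_h^4$. On the blown-up set $\mathset{0,1,2,3,4}\times\Z_4\times\Z_h$, grouped by first coordinate, for each base H-design $H_k$ $(0\le k\le 7)$ and each $\ell\in\Z_h$ define $H_{k,\ell}$ by replacing every base block $\mathset{(i_j,x_j)}_{j=1}^{4}$ with the $h^3$ blocks $\mathset{(i_j,x_j,y_j)}_{j=1}^{4}$ indexed by $(y_1,y_2,y_3,y_4)\in D_\ell$. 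Given a transversal $3$-subset of the blown-up set with base $T$, the base 3-subset $T$ lies in a unique block $B$ of $H_k$, and the OA property of $D_\ell$ then forces a unique lift of $B$ in $H_{k,\ell}$ to contain the full $3$-subset, so each $H_{k,\ell}$ is an H$(5,4h,4,3)$. Similarly, any transversal $4$-subset has a unique base in some unique $H_k$, and a unique residue $\ell=y_1+y_2+y_3+y_4$, placing it in exactly one $H_{k,\ell}$; hence $\{H_{k,\ell}\}$ is the desired LH$(5,4h,4,3)$.

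The main obstacle is the first stage. The parameters are consistent, but producing the explicit partition requires choosing base blocks whose orbits under a well-chosen symmetry group cover the $1280$ transversal $4$-subsets without overlap; this is the only step that is not automatic, and may require either a careful ad hoc construction or a short computer search. The second stage is essentially mechanical once the base case is in hand, relying only on the single-parity-check orthogonality of $\Z_h$.
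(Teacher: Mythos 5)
First, a point of comparison: the paper does not prove this proposition at all — it is one of a list of parameter statements imported verbatim from~\cite{EtZh21} — so there is no internal argument to measure yours against. Judged on its own terms, your second stage is correct and complete: each $D_\ell$ is indeed an \textup{OA}$(3,4,h)$ and the $D_\ell$ partition $\Z_h^4$; a transversal triple of the blown-up set has a unique base block in $H_k$ and then a uniquely determined fourth $y$-coordinate, so each $H_{k,\ell}$ is an \textup{H}$(5,4h,4,3)$; and a transversal quadruple determines its base block (hence $k$) and its residue $\ell$ uniquely, so the $8h$ systems of $160h^3$ blocks each partition all $1280h^4$ transversal quadruples. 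This is a clean, self-contained reduction of the general case to $h=1$, and it is arguably more explicit than anything the paper offers.

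The gap is the first stage, and flagging it does not close it. You verify only divisibility conditions ($1280=8\cdot 160$, etc.), and parameter consistency never implies existence; an explicit \textup{LH}$(5,4,4,3)$, or at least a citation for one, is indispensable. Note also that the one general mechanism the paper provides for manufacturing such large sets, Proposition~\ref{prop:LS_LH}, is unavailable here: it would require an \textup{LS}$(3,4,5;4)$, hence a Steiner system \textup{S}$(3,4,5)$, which does not exist since $\binom{5}{3}/\binom{4}{3}=10/4$ is not an integer. So the base case genuinely requires a direct construction of the eight mutually disjoint \textup{H}$(5,4,4,3)$s — precisely the content supplied by~\cite{EtZh21} and deferred in your proposal to ``a careful ad hoc construction or a short computer search.'' As written the proof is therefore incomplete; supplying the base large set explicitly (your symmetry-group framework is a reasonable way to do so) or citing it would make the whole argument valid.
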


\begin{corollary}
There exists a \textup{GDD}$(4,5,5 \cdot 4h + 8h )$ of type $(4h)^5  (8h)^1$.
\end{corollary}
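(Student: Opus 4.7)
The plan is to obtain this corollary as an immediate application of Proposition~\ref{prop:LKRS} to the large set of H-designs produced by the preceding proposition. Concretely, I would instantiate Proposition~\ref{prop:LKRS} with parameters $t = 3$, $n = 5$, and group size $g = 4h$. The preceding proposition guarantees the existence of an \textup{LH}$(5,4h,4,3)$ for every positive integer $h$, which is exactly a large set of \textup{GDD}$(3,4,20h)$ of type $(4h)^5$.

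Next I would compute the parameters of the resulting GDD on the other side of the equivalence. With the substitution above, the hole size in Proposition~\ref{prop:LKRS} becomes $g(n-t) = 4h \cdot (5-3) = 8h$, and the overall point count is $ng + g(n-t) = 5 \cdot 4h + 8h = 28h$. Thus the ``only if'' direction of Proposition~\ref{prop:LKRS} produces a \textup{GDD}$(t+1,t+2,ng + g(n-t)) = \textup{GDD}(4,5,5\cdot 4h + 8h)$ of type $(4h)^5 (8h)^1$, which is precisely the object claimed in the corollary.

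The one point that needs care, and essentially the only potential source of confusion, is the clash of notation: the symbol $h$ is used in Proposition~\ref{prop:LKRS} for the size of the distinguished large group $g(n-t)$, while in the corollary $h$ is the free positive integer parameter controlling the group size $4h$. Once this is disambiguated, the verification is purely a substitution, and no further combinatorial work is required.
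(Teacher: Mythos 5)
Your proposal is correct and follows exactly the route the paper intends: the corollary is an immediate application of Proposition~\ref{prop:LKRS} with $t=3$, $n=5$, $g=4h$ to the \textup{LH}$(5,4h,4,3)$ of the preceding proposition, giving hole size $g(n-t)=8h$. Your remark about the clash between the symbol $h$ in Proposition~\ref{prop:LKRS} and the free parameter $h$ of the corollary is a sensible clarification.
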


\begin{proposition}
There exists an \textup{LH}$(6,g,4,3)$ if and only if $g$ is divisible by 3.
\end{proposition}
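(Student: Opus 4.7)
For necessity, note that an $\textup{H}(6,g,4,3)$ is a $\textup{GDD}(3,4,6g)$ of type $g^6$, and the number of its blocks through any fixed point equals $\binom{5}{2}g^2/\binom{3}{2}=10g^2/3$. Integrality forces $3\mid g$ for a single $\textup{H}(6,g,4,3)$ to exist, and a fortiori for an $\textup{LH}(6,g,4,3)$.

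For sufficiency, write $g=3h$ and proceed in two stages. The first stage is the base case $\textup{LH}(6,3,4,3)$: a partition of the $\binom{6}{4}\cdot 3^4=1215$ transversal quadruples on an $18$-point set (with six groups of size~$3$) into $9$ disjoint copies of $\textup{H}(6,3,4,3)$, each having $135$ blocks. Proposition~\ref{prop:LS_LH} is unavailable here because $\textup{S}(3,4,6)$ does not exist; instead I would construct the base directly by a small-design or starter-block technique exploiting a transitive group action on the $18$ points, or equivalently (by Proposition~\ref{prop:LKRS}) by producing a single $\textup{GDD}(4,5,27)$ of type $3^6 9^1$.

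The second stage inflates the base to $\textup{LH}(6,3h,4,3)$. Fix a \emph{resolvable} family of $h$ pairwise disjoint $\textup{OA}(3,4,h)$s whose union is $\{1,\dots,h\}^{4}$; such a family exists for every $h$, provided for $h$ a prime power by the $h$ cosets in $\F_h^{\,4}$ of the linear orthogonal array $\{a\in\F_h^4 : a_1+a_2+a_3+a_4=0\}$, and for general $h$ by a direct product of such constructions over its prime-power factors. Replace each base point $x$ by a fiber $\{x\}\times\{1,\dots,h\}$, so each group of size $3$ becomes a group of size $3h$. Writing the base as $\{\cH_1,\dots,\cH_9\}$ and the resolvable family as $\textup{OA}_1,\dots,\textup{OA}_h$, set
\[
   \cH'_{i,j}=\bigl\{\{(x_1,y_1),\dots,(x_4,y_4)\} : \{x_1,\dots,x_4\}\in\cH_i,\ (y_1,\dots,y_4)\in\textup{OA}_j\bigr\},
\]
where the $x_\ell$ are ordered by group index. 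The strength-$3$ property of each OA makes every $\cH'_{i,j}$ an $\textup{H}(6,3h,4,3)$; disjointness of the $\cH_i$ and of the $\textup{OA}_j$ makes the $9h$ designs pairwise block-disjoint; and a cardinality count shows that their union covers all $15(3h)^4$ transversal quadruples.

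The main obstacle is the base $\textup{LH}(6,3,4,3)$: the inflation and the construction of resolvable OAs are routine, but because $\textup{S}(3,4,6)$ does not exist the base cannot be extracted from Proposition~\ref{prop:LS_LH} and thus requires an explicit small-design construction, most likely ad hoc or drawn from classical large-set results of Hartman/Hanani type.
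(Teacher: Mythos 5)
Your necessity argument is correct: the derived design through a fixed point gives $\binom{5}{2}g^{2}/\binom{3}{2}=10g^{2}/3$ blocks, so $3\mid g$ already for a single $\textup{H}(6,g,4,3)$. Your inflation step is also sound, and in fact simpler than you make it: the partition of $\Z_h^{4}$ into the $h$ level sets of $y_1+y_2+y_3+y_4 \pmod{h}$ is a resolvable family of strength-$3$ orthogonal arrays for \emph{every} $h$, so no prime-power case distinction or direct product is needed; block-disjointness together with the count $9h\cdot 135h^{3}=\binom{6}{4}(3h)^{4}$ then does yield $\textup{LH}(6,3h,4,3)$ from $\textup{LH}(6,3,4,3)$.

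The genuine gap is exactly the one you flag yourself: the base case $\textup{LH}(6,3,4,3)$ is never constructed, and once the (routine) inflation is in place it is the entire content of the sufficiency direction. You correctly observe that Proposition~\ref{prop:LS_LH} cannot supply it because $\textup{S}(3,4,6)$ does not exist, but your fallback via Proposition~\ref{prop:LKRS} merely trades it for an equally unconstructed $\textup{GDD}(4,5,27)$ of type $3^{6}9^{1}$, and ``an ad hoc small-design construction'' is a placeholder, not a proof. For comparison, the paper offers no proof of this proposition at all: it is one of a list of parameters imported from the literature on large sets (the text preceding these propositions attributes them to~\cite{EtZh21} and the references therein), so the intended justification is a citation rather than an argument. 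As a self-contained proof your proposal is therefore incomplete; to close it you would need either an explicit $\textup{LH}(6,3,4,3)$ (equivalently, by your own reduction, a $\textup{GDD}(4,5,27)$ of type $3^{6}9^{1}$) or a route to $\textup{LH}(6,g,4,3)$ that does not pass through the case $g=3$.
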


\begin{corollary}
There exists a \textup{GDD}$(4,5,6g + 3g )$ of type $g^6  (3g)^1$ if and only if $g$ is divisible by 3.
\end{corollary}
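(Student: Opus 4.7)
The plan is to chain the two immediately preceding propositions, with no further construction required. First I would specialize Proposition~\ref{prop:LKRS} to $t=3$ and $n=6$, so that $h=g(n-t)=3g$, $gn=6g$, and $ng+h=6g+3g$. With these identifications, Proposition~\ref{prop:LKRS} asserts the equivalence: an \textup{LH}$(6,g,4,3)$ exists if and only if a \textup{GDD}$(4,5,6g+3g)$ of type $g^6(3g)^1$ exists. Next I would invoke the immediately preceding proposition, which characterizes the existence of an \textup{LH}$(6,g,4,3)$ as being equivalent to $3\mid g$. Concatenating the two biconditionals then yields the corollary in both directions at once.

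Since this is a direct substitution, there is no real technical obstacle. The only point that warrants care is checking that the generic parameters of Proposition~\ref{prop:LKRS}, namely a \textup{GDD}$(t+1,t+2,ng+h)$ of type $g^n h^1$, correctly specialize to the corollary's \textup{GDD}$(4,5,6g+3g)$ of type $g^6(3g)^1$; this amounts to the routine identifications $t+1=4$, $t+2=5$, $n=6$, and $h=3g$. All remaining content is delegated to the quoted results, so no additional design-theoretic argument is needed.
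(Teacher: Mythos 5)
Your proposal is correct and matches the paper's intended argument exactly: the corollary is obtained by specializing Proposition~\ref{prop:LKRS} with $t=3$, $n=6$, $h=g(n-t)=3g$ and chaining the resulting biconditional with the preceding proposition on \textup{LH}$(6,g,4,3)$. Nothing further is needed.
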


\begin{proposition}
For each $h \geq 1$ there exist an \textup{LS}$(3,4,20;9h)$ and an \textup{LH}$(20,9h,4,3)$.
\end{proposition}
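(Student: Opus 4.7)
The plan is to first produce the LS$(3,4,20;9h)$ and then derive the LH$(20,9h,4,3)$ from it via Proposition~\ref{prop:LS_LH}, so the real content lies in constructing the LS.

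For the LS$(3,4,20;9h)$, I would reduce to the base case $h=1$: an LS$(3,4,20;9)$ is a family of $17 \cdot 9 = 153$ Steiner quadruple systems on a fixed $20$-set in which every $4$-subset is contained in exactly $9$ of them. This does not follow from elementary counting and is the substantive combinatorial input of the statement; I would obtain it from the explicit large-set construction for Steiner quadruple systems of order $20$ established in~\cite{EtZh21}. The multiplicity $9$ (rather than $1$) reflects divisibility/resolvability constraints specific to this order arising in the recursive construction. Once this base case is fixed, the case of general $h$ is immediate: taking $h$ (possibly identical) copies of the LS$(3,4,20;9)$ yields a family in which every $4$-subset is covered exactly $9h$ times, i.e., an LS$(3,4,20;9h)$.

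For the LH$(20,9h,4,3)$, I would apply Proposition~\ref{prop:LS_LH} with $t=3$, $k=4$, $n=20$, and $g=9h$. The hypothesis needs an OA$(3,4,9h)$ together with an LS$(3,4,20;(9h)^{k-t})= $ LS$(3,4,20;9h)$. The orthogonal array exists for every $h \geq 1$ by the simple OA$(t-1,t,k)$ proposition in Section~\ref{sec:preliminaries} (taking $t=4$ and $k=9h$, the construction appending the mod-$9h$ sum as a fourth coordinate works), and the LS has just been built. Proposition~\ref{prop:LS_LH} then yields the desired LH$(20,9h,4,3)$.

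The main obstacle is the base case LS$(3,4,20;9)$: no elementary copying or averaging argument produces a nontrivial large set of Steiner quadruple systems, so one genuinely needs the recursive/orbit construction from~\cite{EtZh21} for order $20$. Everything after that step -- the superposition to multiplicity $9h$ and the transfer to an LH via Proposition~\ref{prop:LS_LH} -- is routine.
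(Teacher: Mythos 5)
Your proposal matches the paper's treatment: the paper states this proposition without proof, attributing the large sets \textup{LS}$(3,4,20;9h)$ to~\cite{EtZh21} and obtaining the \textup{LH}$(20,9h,4,3)$ exactly as you do, via Proposition~\ref{prop:LS_LH} with $g=9h$ (so $g^{k-t}=9h$) together with an \textup{OA}$(3,4,9h)$. Your extra reduction of general $h$ to the base case $\lambda=9$ by superposing $h$ copies is a correct, routine refinement and does not change the substance, which in both cases rests on the cited construction from~\cite{EtZh21}.
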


\begin{corollary}
There exists a \textup{GDD}$(4,5,20 \cdot 9h + 17 \cdot 9h )$ of type $(9h)^{20}  (17 \cdot 9h)^1$ for each $h \geq 1$.
\end{corollary}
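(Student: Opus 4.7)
The plan is to derive this corollary as a direct application of Proposition~\ref{prop:LKRS} to the large set produced by the immediately preceding proposition, so essentially no new combinatorial work is needed; the only subtlety is avoiding a notational collision between the free parameter $h$ in the statement we want to prove and the symbol $h$ that already appears in Proposition~\ref{prop:LKRS} as the size of the exceptional group.

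First, I would rename the variables to keep the bookkeeping clean. The preceding proposition guarantees, for every $h \geq 1$, an \textup{LH}$(20,9h,4,3)$. I would substitute into Proposition~\ref{prop:LKRS} the parameters $t = 3$, $n = 20$, and $g = 9h$; note that with $t=3$ the large set of \textup{GDD}$(t,t+1,gn)$ of type $g^n$ that appears in Proposition~\ref{prop:LKRS} is exactly an \textup{LH}$(n,g,t+1,t) = \textup{LH}(20,9h,4,3)$, so the hypothesis of the proposition is literally the output of the preceding one.

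Next I would compute the parameter that Proposition~\ref{prop:LKRS} calls $h$ (and which I would rename, say, $h^\ast$, to avoid confusion with the free variable in the corollary): it equals $g(n-t) = 9h \cdot (20-3) = 17\cdot 9h$. The proposition then yields a \textup{GDD}$(t+1,t+2,ng+h^\ast)$ of type $g^n (h^\ast)^1$, which after substitution becomes a \textup{GDD}$(4,5,20\cdot 9h + 17\cdot 9h)$ of type $(9h)^{20}(17\cdot 9h)^1$, exactly as claimed.

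There is no serious obstacle: the preceding proposition supplies the hypothesis of Proposition~\ref{prop:LKRS} on the nose, and the arithmetic $g(n-t) = 17\cdot 9h$ is immediate. The only thing to be careful about in writing the proof is the double use of the symbol $h$, which I would flag explicitly for the reader to prevent any ambiguity between the exceptional group size in Proposition~\ref{prop:LKRS} and the scaling parameter $h \geq 1$ that indexes the family of GDDs in the corollary.
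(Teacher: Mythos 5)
Your proposal is correct and is exactly the route the paper intends: each corollary in Section~\ref{sec:GDDfromLS} is obtained by feeding the preceding proposition's \textup{LH}$(n,g,t+1,t)$ into Proposition~\ref{prop:LKRS}, and your substitution $t=3$, $n=20$, $g=9h$ with exceptional group size $g(n-t)=17\cdot 9h$ matches the claimed parameters. The remark about the clash between the corollary's $h$ and the $h$ of Proposition~\ref{prop:LKRS} is a sensible precaution but does not change the argument.
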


\begin{proposition}
For each $h \geq 1$ and $\ell \geq 1$, there exist an \textup{LS}$(3,4,5 \cdot 2^\ell;9h)$ and an \textup{LH}$(5 \cdot 2^\ell,9h,4,3)$.
\end{proposition}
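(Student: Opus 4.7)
My plan is to split the statement into its two claims and establish the existence of LS$(3,4,5\cdot 2^\ell;9h)$ first, then derive LH$(5\cdot 2^\ell, 9h, 4, 3)$ from it by invoking Proposition~\ref{prop:LS_LH}. For the LS I will argue by induction on $\ell$. The base cases are already supplied by earlier propositions of this section: $\ell=1$ is the existence of LS$(3,4,10;g)$ with $g=9h$, and $\ell=2$ is exactly the stated LS$(3,4,20;9h)$. The inductive step I aim for is a doubling that takes an LS$(3,4,n;9h)$ into an LS$(3,4,2n;9h)$, with the multiplicity preserved.

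The doubling I have in mind is the classical Hanani-style $v \to 2v$ construction for Steiner quadruple systems, lifted to the large-set setting as in~\cite{EtZh21}. I would partition the $2n$-point set into halves $A$ and $B$, and classify each $4$-subset $S$ by the pair $(|S \cap A|, |S \cap B|)$. Each component SQS$(2n)$ of the target large set is assembled from three ingredients: a pair of SQS$(n)$s drawn from the inductive LS, one on each half (covering types $(4,0)$ and $(0,4)$); a transversal-design product on $A$ and $B$ (covering types $(3,1)$ and $(1,3)$); and a resolvable bipartite $2$-design on $A \cup B$ (covering type $(2,2)$). Indexing these ingredients by a common group action so that they rotate in parallel is what makes each $4$-subset land in exactly $9h$ of the resulting SQS$(2n)$s.

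Once LS$(3,4,5\cdot 2^\ell;9h)$ is in hand, the LH conclusion follows at once from Proposition~\ref{prop:LS_LH} applied with $t=3$, $k=4$, $n=5\cdot 2^\ell$, $g=9h$: the required multiplicity $g^{k-t}=9h$ is exactly what we have just constructed, and an OA$(3,4,9h)$ is guaranteed by the earlier proposition giving OA$(t-1,t,k)$ for all $t,k\geq 2$ (applied with $t=4$ and $k=9h$). The main obstacle is the doubling step, and specifically the $(2,2)$ intersection type: such a $4$-subset is seen by neither half-SQS nor by any single bipartite product, so I must knit several transversal designs together and verify that each $(2,2)$-subset is covered with multiplicity exactly $9h$, not some larger multiple. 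Getting this arithmetic clean under the group action, while the other three types are handled more routinely by the inductive LS and by off-the-shelf transversal designs, is where I expect the bulk of the technical work to lie.
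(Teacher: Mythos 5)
The paper does not actually prove this proposition: like the other existence statements in Section~\ref{sec:GDDfromLS}, it is imported verbatim from~\cite{EtZh21}, so there is no internal proof to match your argument against. Your second half is correct and is exactly how these LH parameters are obtained: Proposition~\ref{prop:LS_LH} with $t=3$, $k=4$, $g=9h$ requires multiplicity $g^{k-t}=9h$, which is the LS just asserted, and the needed OA$(3,4,9h)$ comes from the proposition guaranteeing an OA$(t-1,t,k)$ for all $t,k\geq 2$. The problem is entirely in your first half, the inductive doubling of the large set.

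The per-copy recipe you describe is internally inconsistent. If a single copy of S$(3,4,2n)$ contains a full S$(3,4,n)$ on the half $A$, then every triple inside $A$ is already covered by a block of type $(4,0)$, so that copy cannot also contain any block of type $(3,1)$: your ``transversal-design product covering types $(3,1)$ and $(1,3)$'' would cover those triples a second time, and the result is not a Steiner system. In the classical Hanani doubling there are in fact no $(3,1)$ or $(1,3)$ blocks at all; triples of type $(2,1)$ are absorbed by the $(2,2)$ blocks coming from aligned one-factorizations. The consequence for large sets is that the $(3,1)$-type $4$-subsets, each of which must occur $\lambda$ times somewhere, can only live in copies that contain \emph{no} block inside either half. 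Counting copies (an LS$(3,4,v;\lambda)$ has $\lambda(v-3)$ members), pairing the two half large sets one-to-one consumes $\lambda(n-3)$ copies and leaves $\lambda n$ further copies of S$(3,4,2n)$ that must avoid both halves entirely while equidistributing all $(3,1)$, $(1,3)$ and $(2,2)$ $4$-subsets together with the $(2,2)$ blocks already used in the doubled copies. The existence of these $\lambda n$ ``bi-independent'' quadruple systems and of the coordinated large set of one-factorizations is the whole content of such a recursion, and nothing in your sketch supplies it. A heuristic red flag pointing the same way: LS$(3,4,10;g)$ exists for every $g\geq 2$, so a multiplicity-preserving doubling would yield LS$(3,4,5\cdot 2^{\ell};g)$ for every $g\geq 2$, whereas the statement (and~\cite{EtZh21}) is confined to multiplicity $9h$; the actual construction behind this proposition is evidently not the doubling you propose, and your argument as written does not establish the LS half.
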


\begin{corollary}
There exists a \textup{GDD}$(4,5,5 \cdot 2^\ell \cdot 9h + (5 \cdot 2^\ell -3) \cdot 9h )$ of type $(9h)^{5 \cdot 2^\ell}  ((5 \cdot 2^\ell -3) \cdot 9h)^1$
for each $h \geq 1$ and $\ell \geq 1$.
\end{corollary}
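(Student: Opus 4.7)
The plan is a direct chain of two earlier results: the preceding proposition guarantees the large set, and Proposition~\ref{prop:LKRS} converts any such large set into a GDD with one oversized group. So the proof will amount to invoking both results and checking that the substitutions give precisely the parameters stated in the corollary.

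First, I would invoke the proposition immediately above to obtain, for the given $h \geq 1$ and $\ell \geq 1$, an \textup{LH}$(5 \cdot 2^\ell, 9h, 4, 3)$. By definition this is a partition of all $4$-subsets of a point set split into $5 \cdot 2^\ell$ groups of size $9h$ (meeting each group in at most one point) into disjoint copies of an H-design H$(5\cdot 2^\ell, 9h, 4, 3)$, i.e.\ a large set of \textup{GDD}$(3, 4, gn)$s of type $g^n$ with $n = 5 \cdot 2^\ell$ and $g = 9h$.

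Next, I would feed this large set into Proposition~\ref{prop:LKRS} with $t = 3$, $g = 9h$, $n = 5 \cdot 2^\ell$. The value of the auxiliary parameter in that proposition becomes $g(n - t) = 9h(5\cdot 2^\ell - 3)$. The forward direction of Proposition~\ref{prop:LKRS} then produces a \textup{GDD}$(t+1, t+2, ng + g(n-t))$ of type $g^n \, (g(n-t))^1$, which in our substitutions is a \textup{GDD}$(4, 5, \, 5 \cdot 2^\ell \cdot 9h + (5 \cdot 2^\ell - 3)\cdot 9h)$ of type $(9h)^{5 \cdot 2^\ell} \, ((5 \cdot 2^\ell - 3)\cdot 9h)^1$. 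This is exactly the statement of the corollary.

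There is no real obstacle here; the only point requiring a moment's care is the notational collision between the multiplier $h$ appearing in the corollary (inside the group size $9h$) and the letter $h$ used in Proposition~\ref{prop:LKRS} for the size of the extra group. Once this is disambiguated by writing $h_{\text{Prop}} = g(n-t)$, the substitution is mechanical and the parameters match on the nose, so the corollary follows immediately with no further computation.
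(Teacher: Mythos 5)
Your proposal is correct and matches the paper's intended argument exactly: the corollary is obtained by combining the \textup{LH}$(5\cdot 2^\ell,9h,4,3)$ from the preceding proposition with Proposition~\ref{prop:LKRS} (taking $t=3$, $g=9h$, $n=5\cdot 2^\ell$, so the extra group has size $g(n-t)=(5\cdot 2^\ell-3)\cdot 9h$). Your remark on disambiguating the two uses of the letter $h$ is a sensible precaution but does not change the substance.
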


All the new nonuniform GDDs that were presented in this section can be mixed Steiner systems if they will have minimum distance
three, but the following result shows that this is not possible.

\begin{theorem}
If there exists an \textup{MS}$(t,t+1, \Z_{g+1}^n \times \Z_{m+1})$ (\textup{GDD}$(t,t+1,ng+m)$ of type $g^n m^1$),
then $n \geq \max \{ m+t-1,g+t-1\}$.
\end{theorem}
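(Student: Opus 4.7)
The plan is to prove the two inequalities $n \geq m+t-1$ and $n \geq g+t-1$ separately, by the same technique: in each case I will exhibit an injection from a set of nonzero letters (of size $m$ or $g$) into a set of ``free completion positions'' of cardinality $n-t+1$.

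For the bound $n \geq m+t-1$, fix the weight-$(t-1)$ word $w = \{(1,1),(2,1),\ldots,(t-1,1)\}$ supported on the first $t-1$ small groups (which exist because the $t$-design forces $n \geq t-1$). For each nonzero letter $\gamma \in \{1,\ldots,m\}$ of the big group, the weight-$t$ word $w \cup \{(n+1,\gamma)\}$ is covered by a unique block $B_\gamma$, and $B_\gamma$ contains exactly one further nonzero entry $(j_\gamma,\delta_\gamma)$ with $j_\gamma \in \{t,t+1,\ldots,n\}$, a set of size $n-t+1$. The bound reduces to showing that the assignment $\gamma \mapsto j_\gamma$ is injective.

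To prove injectivity, suppose $\gamma \neq \gamma'$ but $j_\gamma = j_{\gamma'} = j$. The blocks $B_\gamma$ and $B_{\gamma'}$ agree in letter at the $t-1$ positions of $w$, disagree at position $n+1$, and either agree or disagree at position $j$. If $\delta_\gamma = \delta_{\gamma'}$, then $B_\gamma$ and $B_{\gamma'}$ share the weight-$t$ subword $w \cup \{(j,\delta_\gamma)\}$, contradicting the defining $t$-design property. If $\delta_\gamma \neq \delta_{\gamma'}$, then with $s=t-1$ shared same-letter and $r=2$ shared different-letter coordinates, the Hamming distance equals $2(t+1)-2s-r = 2 < 3 = 2(k-t)+1$, contradicting the minimum-distance requirement of the MS. Hence $\gamma \mapsto j_\gamma$ is injective and $m \leq n-t+1$.

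The second bound $n \geq g+t-1$ follows by exactly the same argument applied to $w = \{(2,1),\ldots,(t,1)\}$ with $\gamma$ now ranging over the $g$ nonzero letters of the small group at position $1$; the completion position $j_\gamma$ now ranges over $\{t+1,\ldots,n,n+1\}$, again of size $n-t+1$. There is no real obstacle, but one point deserves care: both MS hypotheses are genuinely used, since the $t$-design property kills the $\delta_\gamma = \delta_{\gamma'}$ subcase while the minimum-distance condition kills the $\delta_\gamma \neq \delta_{\gamma'}$ subcase. Thus the conclusion rests on precisely the extra condition that distinguishes mixed Steiner systems from general nonuniform GDDs, which is consistent with the preceding section showing that the GDD analogues (without the distance condition) exist for parameters violating these bounds.
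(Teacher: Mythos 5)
Your proof is correct and takes essentially the same route as the paper: both bounds are obtained by covering the same two families of weight-$t$ words and showing the completion positions are pairwise distinct, forcing $m \leq n-t+1$ and $g \leq n-t+1$. Your explicit split into the $\delta_\gamma = \delta_{\gamma'}$ and $\delta_\gamma \neq \delta_{\gamma'}$ subcases (design property vs.\ distance-$3$ condition) is a slightly more careful rendering of the injectivity step that the paper states more tersely.
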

\begin{proof}
Let $\cS$ be an MS$(t,t+1, \Z_{g+1}^n \times \Z_{m+1})$ and consider two words
$u=\{ (1,1),(2,1),\ldots,(t-1,1), (n+1,\alpha)\}$ and $v=\{ (1,1),(2,1),\ldots,(t-1,1), (n+1,\beta)\}$, where
$\alpha, \beta \in \Z_{m+1} \setminus \{ \bf0 \}$, $\alpha \neq \beta$. Since the minimum distance of $\cS$ is three, it follows
that $u$ and $v$ are contained in two codewords of~$\cS$,
$x=\{ (1,1),(2,1),\ldots,(t-1,1), (i,a), (n+1,\alpha)\}$ and $y=\{ (1,1),(2,1),\ldots,(t-1,1), (j,b), (n+1,\beta)\}$,
where $t \leq i,j \leq n$, $i \neq j$, $1 \leq a,b \leq g$, which implies that $n-(t-1) \geq m$.

Consider now the $g$ words of weight $t$ of the form $u=\{ (1,\alpha),(2,1),\ldots,(t,1)\}$, where $\alpha \in \Z_{g+1} \setminus \{0\}$.
These $g$ codewords must be covered by $g$ distinct codewords of the form
$x_\alpha = \{ (1,\alpha),(2,1),\ldots,(t,1), (i_\alpha,j_\alpha)\}$, where $t+1 \leq i_\alpha \leq n+1$,
$j_\alpha \neq 0$, and for two such codewords $x_\beta = \{ (1,\beta),(2,1),\ldots,(t,1), (i_\beta,j_\beta)\}$
and $x_\gamma = \{ (1,\gamma),(2,1),\ldots,(t,1), (i_\gamma,j_\gamma)\}$ we have $i_\beta \neq i_\gamma$.

This implies that $n-(t-1) \geq g$ and hence $n \geq \max \{ m+t-1,g+t-1\}$.
\end{proof}

\section{Construction from Resolutions and Orthogonal Arrays}
\label{sec:largesets}

The following general construction is a generalization of a construction in~\cite{LCZ19}.
This generalization will lead to a few interesting mixed Steiner systems.


\begin{construction}
\label{const:fromLS}
Let $\cR$ be a set of $k$-subsets of $\Z_n$ in which each
$t$-subset of $\Z_n$ is contained in at most one $k$-subset of $\cR$.
Let $\cT$ be a set that contains $r$ subsets, $\cT = \{\cT_1,\cT_2,\ldots,\cT_r\}$, where each $\cT_i$ is a Steiner system S$(t-1,k-1,n)$ on $\Z_n$
and each two distinct $\cT_i$'s are disjoint. Moreover, each $t$-subset of $\Z_n$ which is not contained in a $k$-subset of $\cR$
is contained in exactly one of the $\cT_i$'s.
Let $\cS'$ be the system on $\Z_2^n \times \Z_{r+1}$ whose blocks are
$$
B_1 \triangleq \{ \{x_1,x_2,\ldots x_k \} ~:~ \{ x_1,x_2,\ldots,x_k \} \in \cR \}
$$
and
$$
B_2 \triangleq \{ \{ x_1,x_2,\ldots,x_{k-1},(n+1,i) \} ~:~ \{ x_1,x_2,\ldots,x_{k-1} \} \in \cT_i ,~~ 1 \leq i \leq r \} ~.
$$
\hfill\quad $\blacksquare$
\end{construction}

\begin{theorem}
\label{thm:fromLS}
The system $\cS'$ generated by Construction~\ref{const:fromLS} is an \textup{MS}$(t,k,\Z_2^n \times \Z_{r+1})$.
\end{theorem}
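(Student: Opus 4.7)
The plan is to verify two properties of $\cS'$: that every weight-$t$ word over $\Z_2^n \times \Z_{r+1}$ is covered by exactly one block, and that the minimum Hamming distance is at least $2(k-t)+1$. Every block already has weight $k$ by construction: blocks of $B_1$ are $k$-subsets of $\Z_n$, and blocks of $B_2$ consist of a $(k-1)$-subset of $\Z_n$ together with a nonzero symbol in coordinate $n+1$.

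For the covering property I would split a weight-$t$ word $x$ on its entry in coordinate $n+1$. If that entry is zero, the support of $x$ is a $t$-subset $T\subseteq\Z_n$, and the hypothesis on $\cR$ and $\cT$ supplies exactly one covering block: either the unique $K\in\cR$ with $T\subseteq K$ (a block of $B_1$), or the block $K'\cup\{(n+1,i)\}\in B_2$, where $K'$ is the unique $(k-1)$-subset of the unique $\cT_i$ containing $T$. If the entry is $\alpha\neq 0$, then the remaining support is a $(t-1)$-subset $U\subseteq\Z_n$; since $\cT_\alpha$ is an S$(t-1,k-1,n)$, there is a unique $K'\in\cT_\alpha$ containing $U$, and $K'\cup\{(n+1,\alpha)\}$ is the unique covering block. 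Uniqueness across the subcases uses the disjointness of distinct $\cT_i$'s together with the implicit fact that no $t$-subset of $\Z_n$ is simultaneously contained in a $\cR$-block and in some $\cT_i$.

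For the distance bound I would run a case analysis on pairs of blocks. Two $B_1$ blocks $K,K'$ satisfy $|K\cap K'|\le t-1$ by the ``at most one'' property of $\cR$, giving distance at least $2(k-t)+2$. Two $B_2$ blocks from the same $\cT_i$ share coordinate $(n+1,i)$ and their underlying $(k-1)$-subsets meet in at most $t-2$ points by the Steiner property of $\cT_i$, again giving $2(k-t)+2$. Two $B_2$ blocks from distinct $\cT_i,\cT_j$ differ in coordinate $n+1$; their underlying $(k-1)$-subsets can meet in at most $t-1$ points because a larger intersection would contain a $t$-subset covered twice, so the distance is at least $2(k-t)+1$. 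Finally, a $B_1$ block $K$ and a $B_2$ block $K'\cup\{(n+1,i)\}$ also differ in coordinate $n+1$, and the same double-cover argument forces $|K\cap K'|\le t-1$, yielding distance at least $2(k-t)+2$.

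The main obstacle is articulating the ``no $t$-subset is covered twice'' property cleanly from the hypotheses of Construction~\ref{const:fromLS} and using it to extract the intersection bound $|K'\cap K''|\le t-1$ in the cross-$\cT_i$ and $B_1$--$B_2$ cases; once this is in hand, the rest is a routine tally of Hamming distances in the four pair-type cases.
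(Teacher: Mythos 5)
Your proof is correct and follows essentially the same route as the paper's: verify the exact-cover property by splitting weight-$t$ words according to their entry in coordinate $n+1$, then bound the minimum distance by a case analysis on the four pair types ($B_1$--$B_1$, same-$\cT_i$ $B_2$ pairs, cross-$\cT_i$ $B_2$ pairs, and mixed), with the cross-$\cT_i$ case attaining the bound $2(k-t)+1$. The ``no $t$-subset is covered twice'' exclusivity you flag is indeed used implicitly by the paper as well, and your intersection bounds ($\le t-1$, $\le t-2$, $\le t-1$, $\le t-1$ respectively) match the paper's distance computations.
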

\begin{proof}
By the definition of the set $\cR$ and the $\cT_i$'s the codewords of $\cS'$ have weight $k$ and each $t$-subset of $\Z_n$ is
contained in exactly one codeword of $\cS'$.

Consider first a word $\{ x_1,x_2,\ldots,x_t\} \in \Z_n$. This word is contained exactly once in a block of $B_1$ or a block of $B_2$
since each $t$-subset is contained exactly once either in a block of $\cR$ or in a block of one of the $\cT_i$'s.

Consider now a word $\{ x_1,x_2,\ldots,x_{t-1},(n+1,i)\}$, where
$\{ x_1,x_2,\ldots,x_{t-1} \} \in \Z_n$ and $i \in \Z_{r+1} \setminus \{ \bf0 \}$. The word $\{x_1,x_2,\ldots,x_{t-1}\}$
is contained in an $(k-1)$-subset of $\cT_i$ since $\cT_i$ is a Steiner system S$(t-1,k-1,n)$ and hence
$\{x_1,x_2,\ldots,x_{t-1},(n+1,i)\}$ is contained in a block of $B_2$.

As for the minimum distance, consider first two blocks $x=\{ x_1,x_2,\ldots,x_{k-1}, (n+1,i) \}$ and $y=\{ y_1,y_2,\ldots,y_{k-1}, (n+1,j) \}$
of $\Z_2^n \times \Z_{r+1}$. If $i=j$ then $\{ x_1,x_2,\ldots,x_{k-1} \}$ and $\{ y_1,y_2,\ldots,y_{k-1} \}$ are two blocks
in $\cS_i$, a Steiner system S$(t-1,k-1,n)$ and hence $d(x,y)=2(k-1-(t-1))+2=2k-2t+2$.
If $i \neq j$ then $\{ x_1,x_2,\ldots,x_{k-1} \}$ and $\{ y_1,y_2,\ldots,y_{k-1} \}$ are two blocks that have no $t$-subset in common,
and hence $d(x,y)=2(k-1-(t-1))+1=2k-2t+1$.

Similarly two blocks $x=\{ x_1,x_2,\ldots,x_k \}$ and $y=\{ y_1,y_2,\ldots,y_k \}$ share no $t$-subset and hence
their distance is at least $d(x,y)=2(k-(t-1))=2k-2t+2$.

Two blocks $x=\{ x_1,x_2,\ldots,x_k \}$ and $y=\{ y_1,y_2,\ldots,y_{k-1}, (n+1,i) \}$ share no $t$ subset and hence
their minimum distance is at least $k+(k-1) - 2(t-1) +1=2k-2t+2$.

Similarly, the Hamming distance of two blocks $x=\{ x_1,x_2,\ldots,x_k \}$ and $y=\{ y_1,y_2,\ldots,y_k \}$
is at least $2(k-t)+2$.

Thus, the minimum Hamming distance of $\cS'$ is $2(k-t)+1$ as required.
\end{proof}

\begin{definition}
A Steiner system \textup{S}$(2,k,n)$ on $\Z_n$ is called {\bf \emph{resolvable}} if its blocks can be partitioned into $\frac{n-1}{k-1}$
disjoint copies of \textup{S}$(1,k,n)$. Each copy of \textup{S}$(1,k,n)$ is called a {\bf \emph{parallel class}} and each
point of $\Z_n$ is contained in exactly one block of the parallel class. Similarly, a parallel class in a design is
a set of blocks that contain each point exactly once, i.e., it is a Steiner system \textup{S}$(1,k,n)$.
\hfill\quad $\blacksquare$
\end{definition}

Resolvable designs attracted lot of attention during the years from the novel exposition in~\cite{HRW72}.

\begin{construction}
Let $k$ be a prime power and $\cA$ be an \textup{OA}$(2,k,k)$, on the symbols of $\Z_k$, where each row of the first $k$ rows is
a sequence with the same symbol.
Construct a system $\cS=(Q,B)$, where $Q = \Z_k \times \Z_{k-1}$ is a set with $k(k-1)$ points (for a codeword of length $k(k-1)$).
The set of blocks $B$ of $\cS$ are partitioned into three sets, $B_1$, $B_2$, and $B_3$.
$$
B_1 \triangleq\{ \{ (i,0),(i,1),\ldots,(i,k-2) \}  ~:~ 0 \leq i \leq k-1 \}~,
$$
$$
B_2 \triangleq\{ \{ (0,j),(1,j),\ldots,(k-1,j) \}  ~:~ 0 \leq j \leq k-2 \}~,
$$
$$
B_3 \triangleq \{ \{  (i,j_i) ~:~ 0 \leq j_i \leq k-2, ~ 0 \leq i \leq k-1 \} ~:~ (j_0,j_1,\ldots,j_{k-1}) \in \cA  \}~,
$$
and
$$
B = B_1 \cup B_2 \cup B_3 ~.
$$
\hfill\quad $\blacksquare$
\end{construction}

Note, that set $B_1$ contains $k$ blocks, each one of size $k-1$. These blocks form one parallel class, i.e.,
a Steiner system S$(1,k-1,k(k-1))$.

The set $B_2$ contains $k-1$ blocks, each one of size $k$. These blocks form one parallel class, i.e.,
a Steiner system S$(1,k,k(k-1))$.

Since $\cA$ is an OA$(2,k,k)$ in which each one of the first $k$ rows has $k$ identical symbols, it follows that
in each one of the other rows each symbol between 0 and $k-1$ appears exactly once.
The blocks of the set $B_3$ are formed from these rows of $\cA$, where the symbol $k-1$ is omitted.
Let $\cA'$ denote the array constructed from these rows of $\cA$ omitting the symbol $k-1$.
It can be proved (see~\cite{Etz22}) that in $B_3$ there are $(k-1)k$ blocks, each block of size $k-1$, and they can be
partitioned into $k-1$ parallel classes, each one with $k$ blocks.

\begin{lemma}
\label{lem:pairs_base}
Each pair of $Q = \Z_k \times \Z_{k-1}$ is contained in exactly one block of $\cS$.
\end{lemma}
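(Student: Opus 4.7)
The plan is to classify the pairs of $Q=\Z_k\times\Z_{k-1}$ by coordinate behavior and then to account for each type using the three block families separately. Call a pair $\{(i_1,j_1),(i_2,j_2)\}$ of \emph{type A} if $i_1=i_2$, of \emph{type B} if $j_1=j_2$, and of \emph{type C} if both coordinates differ.

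First I would dispose of the easy containments. A type A pair with first coordinate $i$ is contained in the unique block $\{(i,0),(i,1),\ldots,(i,k-2)\}$ of $B_1$: no block of $B_2$ can contain it (the pair has two distinct second coordinates while a $B_2$ block has a common one), and no block of $B_3$ can contain it either, because a $B_3$ block has the form $\{(i,j_i):j_i\neq k-1\}$ and hence contains at most one point with a given first coordinate. For a type B pair, the common second coordinate $j$ lies in $\Z_{k-1}=\{0,\ldots,k-2\}$, so the pair is covered exactly once by the block of $B_2$ indexed by $j$, while no block of $B_1$ can contain it (the two first coordinates differ).

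The main step, where the orthogonal array hypothesis is genuinely used, has two pieces. First, I would rule out that any $B_3$ block contains a type B pair $\{(i_1,j),(i_2,j)\}$: were this to happen, the corresponding row of $\cA$ would read $j$ in both columns $i_1$ and $i_2$, but the constant row for $j$ already contributes the pair $(j,j)$ in those columns, violating the \textup{OA} property. Second, for a type C pair $\{(i_1,j_1),(i_2,j_2)\}$ with $j_1\neq j_2$ in $\Z_{k-1}$, the \textup{OA} property applied to columns $i_1,i_2$ selects the unique row in which those columns read $j_1$ and $j_2$; the constraint $j_1\neq j_2$ forces this row to be non-constant and hence to define a block of $B_3$, and the constraint $j_1,j_2\neq k-1$ ensures both points are retained in that block. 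The only place that needs care is the bookkeeping around the deleted symbol $k-1$, but this is cleanly handled by the fact that the second coordinates of points of $Q$ already lie in $\Z_{k-1}$, so the symbol $k-1$ never figures in any pair that must be covered.
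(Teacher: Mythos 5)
Your proof is correct and takes essentially the same route as the paper's: both split pairs by whether the first coordinates agree, cover the equal case by $B_1$, and use the OA property on a pair of columns to locate the unique covering row, which is a constant row (hence a $B_2$ block) when the second coordinates agree and a non-constant row (hence a $B_3$ block) otherwise. Your version merely spells out the mutual-exclusion bookkeeping more explicitly than the paper does.
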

\begin{proof}
Note that the $i$-th column of $\cA$ is associated with the points $\{i\} \times \Z_{k-1}$
in the blocks of $B_3$ and the element $j \in \{0,1,\ldots,k-2\}$
in column $i$ of $\cA$ ($k-1$ is omitted) is associated with the point $(i,j)$. Therefore, since each ordered pair $(\alpha,\beta)$,
$\alpha , \beta \in \Z_{k-1}$ appears exactly once in each ordered pair of columns $(\ell,m)$ of $\cA$, it follows that each pairs of points
$\{(\ell,\alpha),(m,\beta)\}$ is contained in exactly one block of $\cS$, either in a block of $B_2$ when $\alpha=\beta$
or a block of $B_3$ when $\alpha \neq \beta$.
Each one of the remaining pairs of the form $\{(m,\alpha),(m,\beta)\}$, where $0 \leq m \leq k-1$ and
$0 \leq \alpha < \beta \leq k-2$, is contained exactly once in the blocks of $B_1$.
\end{proof}

Now, by Construction~\ref{const:fromLS} (where the blocks of $B_2$ take the place of $\cR$ and $B_1$ and the $k-1$ parallel classes of $B_3$
take the place of the $\cT_i$'s) and Theorem~\ref{thm:fromLS} we have the following consequence.
\begin{lemma}
\label{lem:applyCON}
By applying Construction~\ref{const:fromLS} on the blocks of $\cS$ we obtain an \textup{MS}$(2,k,\Z_2^{(k-1)k} \times \Z_{k+1})$.
\end{lemma}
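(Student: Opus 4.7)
The plan is to verify, one by one, that the three block families $B_1$, $B_2$, $B_3$ of $\cS$ fit exactly into the slots required by Construction~\ref{const:fromLS} with $t=2$, $n = k(k-1)$, and $r=k$, and then quote Theorem~\ref{thm:fromLS}.

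First I would set $\cR = B_2$. Since $B_2$ is a parallel class, its $k-1$ blocks (each of size $k$) are pairwise disjoint, so every pair of $Q$ lies in at most one block of $\cR$, which is the only structural requirement Construction~\ref{const:fromLS} places on $\cR$ for $t=2$. Next I would identify the family $\cT = \{\cT_1,\ldots,\cT_k\}$ as follows: take $\cT_1 = B_1$, and take $\cT_2,\ldots,\cT_k$ to be the $k-1$ parallel classes into which, as noted just before Lemma~\ref{lem:pairs_base}, the blocks of $B_3$ partition. Each of these $k$ families is by construction a parallel class of $(k-1)$-subsets on the $k(k-1)$ points of $Q$, hence an S$(1,k-1,k(k-1))$, which is exactly an S$(t-1,k-1,n)$ in the language of Construction~\ref{const:fromLS}.

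Next I would check disjointness and the covering condition. Since $B_1$ uses only blocks of the form $\{i\}\times \Z_{k-1}$ while each parallel class of $B_3$ uses blocks drawn from distinct columns of $\cA'$, the $\cT_i$'s are pairwise disjoint as collections of $(k-1)$-subsets. To confirm that every pair of $Q$ not contained in a block of $\cR$ lies in exactly one $\cT_i$, I would invoke Lemma~\ref{lem:pairs_base}, which already asserts that each pair of $Q$ lies in exactly one block of $B_1 \cup B_2 \cup B_3$; removing the pairs absorbed by $B_2 = \cR$ leaves precisely the pairs that occur in $B_1$ or in $B_3$, and each such pair sits in exactly one block of exactly one of the parallel classes forming the $\cT_i$'s.

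With the hypotheses of Construction~\ref{const:fromLS} verified, applying that construction produces a system $\cS'$ on $\Z_2^{k(k-1)} \times \Z_{r+1} = \Z_2^{k(k-1)} \times \Z_{k+1}$, and by Theorem~\ref{thm:fromLS} this $\cS'$ is an \textup{MS}$(2,k,\Z_2^{k(k-1)} \times \Z_{k+1})$, as claimed. I do not foresee a real obstacle: the work is essentially bookkeeping to match $\cS$ to the template of Construction~\ref{const:fromLS}, and the only point that requires care is making sure the count $r = 1 + (k-1) = k$ produces the $\Z_{k+1}$ factor in the alphabet, which it does.
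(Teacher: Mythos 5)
Your proposal is correct and follows exactly the route the paper takes: the paper's (one-sentence) justification is precisely the identification $\cR = B_2$ and $\cT_1,\ldots,\cT_k$ equal to $B_1$ together with the $k-1$ parallel classes of $B_3$, so that $r=k$ and Theorem~\ref{thm:fromLS} gives the \textup{MS}$(2,k,\Z_2^{(k-1)k}\times\Z_{k+1})$. You merely spell out the bookkeeping (disjointness of the $\cT_i$'s and the covering condition via Lemma~\ref{lem:pairs_base}) that the paper leaves implicit.
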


Note that an \textup{MS}$(2,k,\Z_2^{(k-1)k} \times \Z_{k+1})$ was also constructed in Construction~\ref{const:fromOA} (see Theorem~\ref{thm:MSfromOA}).

\begin{example}
For $k=4$, the two set $B_1$ and $B_2$ contain the following blocks (codewords):
$$
B_1 =
\begin{array}{cccc}
111 & 000 & 000 & 000 \\
000 & 111 & 000 & 000 \\
000 & 000 & 111 & 000 \\
000 & 000 & 000 & 111 \\
\end{array}, ~~
B_2 =
\begin{array}{cccc}
100 & 100 & 100 & 100 \\
010 & 010 & 010 & 010 \\
001 & 001 & 001 & 001 \\
\end{array}~.
$$
The blocks of $B_1$ form a Steiner system \textup{S}$(1,3,12)$ and the blocks of $B_2$ form a Steiner system \textup{S}$(1,4,12)$.
The $12 \times 4$ orthogonal array \textup{OA}$(2,4,4)$ $\cA$, its derived array $\cA'$, its associated three parallel classes of $B_3$,
each one is a Steiner system \textup{S}$(1,3,12)$
and the system $\cS'$ derived from it via Construction~\ref{const:fromLS} are as follows:
$$
\cA=
\begin{array}{c}
0  0  0  0 \\
1  1  1  1 \\
2  2  2  2 \\
3  3  3  3 \\
3  0  2  1 \\
3  1  0  2 \\
3  2  1  0 \\
0  3  1  2 \\
2  3  0  1 \\
1  3  2  0 \\
0  2  3  1 \\
2  1  3  0 \\
1  0  3  2 \\
2  0  1  3 \\
0  1  2  3 \\
1  2  0  3 \\
\end{array},~
\cA'=
\begin{array}{cccc}
\varnothing & 0 & 2 & 1 \\
0 & \varnothing & 1 & 2 \\
2 & 1 & \varnothing & 0 \\
1 & 2 & 0 & \varnothing \\
\hline
\varnothing & 1 & 0 & 2 \\
1 & \varnothing & 2 & 0 \\
0 & 2 & \varnothing & 1 \\
2 & 0 & 1 & \varnothing \\
\hline
\varnothing & 2 & 1 & 0 \\
2 & \varnothing & 0 & 1 \\
1 & 0 & \varnothing & 2 \\
0 & 1 & 2 & \varnothing \\
\end{array},~
B_3 =
\begin{array}{cccc}
000 & 100 & 001 & 010 \\
100 & 000 & 010 & 001 \\
001 & 010 & 000 & 100 \\
010 & 001 & 100 & 000 \\
\hline
000 & 010 & 100 & 001 \\
010 & 000 & 001 & 100 \\
100 & 001 & 000 & 010 \\
001 & 100 & 010 & 000 \\
\hline
000 & 001 & 010 & 100 \\
001 & 000 & 100 & 010 \\
010 & 100 & 000 & 001 \\
100 & 010 & 001 & 000 \\
\end{array},~
\cS'=
\begin{array}{ccccc}
100 & 100 & 100 & 100 & 0 \\
010 & 010 & 010 & 010 & 0 \\
001 & 001 & 001 & 001 & 0 \\
111 & 000 & 000 & 000 & 1 \\
000 & 111 & 000 & 000 & 1 \\
000 & 000 & 111 & 000 & 1 \\
000 & 000 & 000 & 111 & 1 \\
000 & 100 & 001 & 010 & 2 \\
100 & 000 & 010 & 001 & 2 \\
001 & 010 & 000 & 100 & 2 \\
010 & 001 & 100 & 000 & 2 \\
000 & 010 & 100 & 001 & 3 \\
010 & 000 & 001 & 100 & 3 \\
100 & 001 & 000 & 010 & 3 \\
001 & 100 & 010 & 000 & 3 \\
000 & 001 & 010 & 100 & 4 \\
001 & 000 & 100 & 010 & 4 \\
010 & 100 & 000 & 001 & 4 \\
100 & 010 & 001 & 000 & 4 \\
\end{array}.
$$
\hfill\quad $\blacksquare$
\end{example}

We continue to develop a new construction. The system $\cS$ which was constructed will be called {\bf \emph{the base system}}.
Let $\cT$ be a resolvable S$(2,k,n)$ on the set of points $\Z_n$. Such a system has $\frac{n-1}{k-1}$ parallel classes.
We construct a new system $\cT(\cS)$ on the set of points $\Z_n \times \Z_{k-1}$.
Given any block $X=\{ x_1 , x_2, \ldots , x_k \}$ in $\cT$, we construct the base system on the set of points $X \times \Z_{k-1}$ which
are contained in $\Z_n \times \Z_{k-1}$. We analyze now the constructed system $\cT(\cS)$.

Each $(k-1)$-subset constructed from the set $B_1$ on the points $\{ i \} \times \Z_{k-1}$ is shared
by all the base systems on each block $X=\{ x_1 , x_2, \ldots , x_k \}$ of $\cT$ which contains the point $i$.
Therefore, the blocks derived from $B_1$ contribute to $\cT(\cS))$ one parallel class with blocks of size $k-1$ to the system $\cT(\cS)$.

For each parallel class of $\cT$, the blocks derived from $B_2$
(of all the blocks of the parallel class) yield one parallel class for $\cT(\cS)$ with a total of $\frac{n}{k}$ blocks,
each one of size $k$, for each block of the parallel class.
Since there are $\frac{n-1}{k-1}$ parallel classes in $\cT$, it follows that the blocks of $B_2$ contribute
$\frac{n-1}{k-1}$ parallel classes with blocks of size $k$ to $\cT(\cS)$.
Each such parallel class is a Steiner system S$(1,k,n(k-1))$.

The set $B_3$ contributes to the base system $k-1$ parallel classes. The system $\cT$ has $\frac{n-1}{k-1}$ parallel classes
and each parallel class yields $k-1$ parallel classes from the blocks of the base system. Hence, there are $n-1$ parallel classes
in $\cT(\cS)$ which are derived from the blocks of $B_3$ in the base system. Each parallel class has $n$ blocks, each one of size $k-1$,
i.e., each parallel class is a Steiner system S$(1,k-1,n(k-1))$.

The following lemma can be easily verified in a similar way to the proof of Lemma~\ref{lem:pairs_base}.
\begin{lemma}
Each pair of $\Z_n \times \Z_{k-1}$ is contained exactly once in $\cT(\cS)$.
\end{lemma}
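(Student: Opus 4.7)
The plan is to verify the two required properties separately: every pair of $\Z_n \times \Z_{k-1}$ is contained in some block of $\cT(\cS)$, and no pair is contained in more than one block. I would split a pair $\{(a,\alpha),(b,\beta)\}$ into two cases according to whether $a=b$ or $a\neq b$, and in each case combine the coverage guarantee of the base system (Lemma~\ref{lem:pairs_base}) with the Steiner property of the resolvable design $\cT$.

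For pairs with $a=b$ (so $\alpha\neq\beta$), I would observe that this pair sits entirely inside the column $\{a\}\times\Z_{k-1}$. Every base system constructed from a block $X\in\cT$ with $a\in X$ contributes the $B_1$-block $\{a\}\times\Z_{k-1}$, but, as already noted in the exposition, these $B_1$-blocks are shared across base systems, so $\{a\}\times\Z_{k-1}$ occurs exactly once in $\cT(\cS)$. Within each base system Lemma~\ref{lem:pairs_base} tells us the pair $\{(a,\alpha),(a,\beta)\}$ is covered only by that $B_1$-block, so no $B_2$- or $B_3$-block of any base system can cover it either. Thus the pair is covered exactly once.

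For pairs with $a\neq b$, I would invoke the defining property of the Steiner system $\cT=$~S$(2,k,n)$: there is a unique block $X\in\cT$ containing $\{a,b\}$. Consequently the pair $\{(a,\alpha),(b,\beta)\}$ lies in exactly one base system, namely the one on $X\times\Z_{k-1}$, since any other block $Y\in\cT$ fails to contain both $a$ and $b$. In that single base system, Lemma~\ref{lem:pairs_base} says the pair is covered exactly once, either by a $B_2$-block (if $\alpha=\beta$) or by a $B_3$-block (if $\alpha\neq\beta$). Combining the two cases gives the lemma.

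The main obstacle is purely the bookkeeping for potential double counting: one must rule out both that the shared $B_1$-blocks are counted multiple times and that a cross-column pair is captured in a second base system. Both points reduce to the uniqueness statement in the Steiner property of $\cT$ together with the internal distribution of pairs among $B_1$, $B_2$, $B_3$ inside the base system, so no new combinatorial ingredient is needed beyond what has already been proved.
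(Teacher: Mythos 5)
Your proof is correct and follows exactly the route the paper intends: the paper itself gives no details, remarking only that the lemma "can be easily verified in a similar way to the proof of Lemma~\ref{lem:pairs_base}," and your case split (same column handled by the shared $B_1$ blocks, distinct columns handled by the unique block of the Steiner system $\cT$ through $\{a,b\}$ combined with Lemma~\ref{lem:pairs_base}) is precisely that verification. No gaps.
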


Now, by applying Construction~\ref{const:fromLS} and Theorem~\ref{thm:fromLS} as befor
(see Lemma~\ref{lem:applyCON}) we have the following consequence.
\begin{lemma}
By applying Construction~\ref{const:fromLS} on the blocks of $\cT(\cS)$ we obtain a mixed
Steiner system \textup{MS}$(2,k,\Z_2^{(k-1)n} \times \Z_{n+1})$.
\end{lemma}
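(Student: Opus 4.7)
The plan is to apply Construction~\ref{const:fromLS} directly with $t=2$, using the system $\cT(\cS)$ as input. First I would identify the ingredients: the role of $\cR$ is played by the $k$-blocks of $\cT(\cS)$, i.e., the blocks derived from copies of $B_2$ of the base system across all blocks of $\cT$; and the role of the systems $\cT_1,\ldots,\cT_r$ is played by the parallel classes of $(k-1)$-blocks of $\cT(\cS)$. From the preceding analysis there are exactly $n$ such parallel classes ($1$ coming from $B_1$ and $n-1$ coming from $B_3$), so that $r=n$, which is precisely the value needed to obtain the claimed alphabet $\Z_2^{(k-1)n} \times \Z_{n+1}$.

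Next I would verify the hypotheses of Construction~\ref{const:fromLS}. For $t=2$ the condition ``each $t$-subset is in at most one block of $\cR$'' follows immediately from the lemma asserting that every pair of $\Z_n \times \Z_{k-1}$ appears in exactly one block of $\cT(\cS)$. The same lemma handles the dichotomy needed for the $\cT_i$'s: any pair not contained in a $k$-block of $\cR$ must lie in a $(k-1)$-block of $\cT(\cS)$, and that block belongs to exactly one of the $n$ parallel classes. Each such parallel class is by definition a Steiner system $\textup{S}(1,k-1,n(k-1))$ on the point set $\Z_n \times \Z_{k-1}$, and the $n$ parallel classes are pairwise disjoint because they arise from distinct resolution components of $\cT(\cS)$.

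With the hypotheses verified, Theorem~\ref{thm:fromLS} applies with parameters $t=2$, block size $k$, ground set $\Z_n \times \Z_{k-1}$ of size $n(k-1)$, and $r=n$, and outputs an $\textup{MS}(2,k,\Z_2^{(k-1)n} \times \Z_{n+1})$, which is exactly the asserted conclusion.

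The main obstacle is simply bookkeeping: making sure the count of $(k-1)$-parallel classes is exactly $n$ (not overcounted by confusing the $B_3$-classes within a single block $X$ of $\cT$ with the merged classes across a parallel class of $\cT$), and checking that a pair in $\Z_n \times \Z_{k-1}$ is classified consistently as either a pair in a $k$-block or a pair in a $(k-1)$-block via the pair-covering lemma for $\cT(\cS)$. Once these accounting points are settled, the construction and its distance guarantee are delivered by Theorem~\ref{thm:fromLS} without further work.
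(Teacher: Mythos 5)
Your proposal is correct and follows the same route as the paper: the paper likewise proves this lemma by invoking Construction~\ref{const:fromLS} and Theorem~\ref{thm:fromLS} exactly as in Lemma~\ref{lem:applyCON}, with the $k$-blocks (from the $B_2$ copies) as $\cR$ and the $n = 1 + (n-1)$ parallel classes of $(k-1)$-blocks (one from $B_1$, $n-1$ from $B_3$) as the $\cT_i$'s, relying on the pair-covering lemma for $\cT(\cS)$. Your write-up is in fact more explicit about the bookkeeping than the paper's one-line justification.
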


We continue to produce mixed Steiner systems for more parameters by modifying the system $\cT(\cS)$.
The idea is to replace some of the $n$ parallel classes with blocks of size $k-1$, derived from $B_3$, with parallel classes
with blocks of size $k$. The construction starts by choosing a second system instead of the base system. This system will be called
{\bf \emph{the replace system}}. The replace system has the blocks of the set $B_1$, but instead of the two sets $B_2$
and $B_3$ we are using another set of blocks $B_4$. The set $B_4$ is derived from an orthogonal array OA$(2,k,k-1)$.
By Theorem~\ref{thm:LS_OA} it
immediately implies that we can apply the construction whenever $k-1$ and $k$ are both powers of primes.
Let $\cD$ be this orthogonal array and
$$
B_4 \triangleq \{ \{  (i,j_i) ~:~ 0 \leq j_i \leq k-2, ~ 0 \leq i \leq k-1 \} ~:~ (j_0,j_1,\ldots,j_{k-1}) \in \cD  \}.
$$
We note that $B_4$ is different from $B_3$ since $B_3$ is constructed from an OA$(2,k,k)$ while $B_4$ is constructed
from an OA$(2,k,k-1)$. Moreover, $B_3$ is constructed from the orthogonal array $\cA$ by omitting some rows and also omitting the symbol $k-1$,
while $B_4$ is constructed from all the rows and all the $k-1$ symbols of the orthogonal array $\cD$.

Now in the new construction of a system $\cT'(\cS)$ similar to $\cT(\cS)$ for each parallel class $\cP$ of $\cT$
we have to make a choice if for each block
$X=\{ x_1 , x_2, \ldots , x_k \}$ of $\cP$, either to use the base system or to use the replace system for
the points of $X \times \Z_{k-1}$. There are $\frac{n-1}{k-1}$ parallel classes in $\cT$, each one contributes $k-1$
parallel systems associated with $B_3$ to $\cT(\cS)$. This implies the following theorem.

\begin{theorem}
\label{thm:manyparam}
The blocks of $\cT'(\cS)$, yield an \textup{MS}$(2,k,\Z_2^{(k-1)n} \times \Z_{n+1-(k-1) \cdot i})$, for
each $0 \leq i \leq \frac{n-1}{k-1}$.
\end{theorem}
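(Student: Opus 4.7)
The plan is to instantiate the construction by fixing $i$ parallel classes of $\cT$ to be replaced uniformly, verify that the resulting $\cT'(\cS)$ meets the hypotheses of Construction~\ref{const:fromLS} with $t=2$, and then invoke Theorem~\ref{thm:fromLS}. To this end I would make the per-block choice uniform within each parallel class of $\cT$: select any $i$ of the $\frac{n-1}{k-1}$ parallel classes, say $\cP^*_1,\ldots,\cP^*_i$, and for every block $X$ contained in some $\cP^*_j$ use the replace system on $X \times \Z_{k-1}$; for every $X$ in one of the remaining $\frac{n-1}{k-1}-i$ parallel classes use the base system. Uniformity within a parallel class is forced because a $B_3$-derived parallel class of $\cT'(\cS)$ is built by gluing together, for a single $\cP$, one internal $B_3$-parallel class from each $X \in \cP$, and this glued cover of $\Z_n \times \Z_{k-1}$ is complete precisely when every $X \in \cP$ contributes $B_3$-blocks.

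Next I would verify, in direct analogy with Lemma~\ref{lem:pairs_base}, that every pair of distinct points of $\Z_n \times \Z_{k-1}$ is contained in exactly one block of $\cT'(\cS)$. Pairs $\{(x,\alpha),(x,\beta)\}$ are covered by the unique $B_1$-block $\{x\}\times\Z_{k-1}$, which is identified across all base and replace systems that contain $x$. Pairs $\{(x,\alpha),(x',\beta)\}$ with $x \neq x'$ are covered by the (base or replace) system on the unique block $Y$ of $\cT$ containing $\{x,x'\}$, using Lemma~\ref{lem:pairs_base} in the base case and the defining property of the \textup{OA}$(2,k,k-1)$ $\cD$ (together with $B_1$) in the replace case.

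I would then take $\cR$ to be the collection of all $k$-blocks of $\cT'(\cS)$ (the ones coming from $B_2$ in base systems and from $B_4$ in replace systems), and let the $\cT_j$'s be the parallel classes of $(k-1)$-blocks: one parallel class from $B_1$ (the blocks $\{x\}\times\Z_{k-1}$, $x \in \Z_n$, which partition $\Z_n \times \Z_{k-1}$), plus $k-1$ glued parallel classes for each of the $\frac{n-1}{k-1}-i$ non-replaced parallel classes $\cP$ of $\cT$, indexed by which of the $k-1$ internal $B_3$-parallel classes of $X\times\Z_{k-1}$ is chosen for each $X \in \cP$. By the previous step these $\cT_j$'s are disjoint as sets of blocks and partition exactly the pairs not contained in any block of $\cR$. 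Counting, $r = 1 + (k-1)\bigl(\tfrac{n-1}{k-1} - i\bigr) = n - (k-1)i$, and Construction~\ref{const:fromLS} together with Theorem~\ref{thm:fromLS} immediately yield the desired \textup{MS}$(2,k,\Z_2^{(k-1)n}\times \Z_{r+1})$ with $r+1 = n+1-(k-1)i$.

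The main obstacle is the combinatorial bookkeeping that the $B_3$-derived glued objects are genuine parallel classes of $\cT'(\cS)$. Once uniformity of choice is enforced on each $\cP$, this reduces to the observation that $\cP$ partitions $\Z_n$, so the disjoint union of $X \times \Z_{k-1}$ over $X \in \cP$ is all of $\Z_n \times \Z_{k-1}$; the $k-1$ glued objects are then legitimate parallel classes, pairwise disjoint and disjoint from the $B_1$-parallel class and from the $B_3$-derived parallel classes coming from any other non-replaced $\cP$. Aside from this point, the proof is routine bookkeeping along the lines of the analysis of $\cT(\cS)$ already given before the theorem.
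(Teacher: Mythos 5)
Your proof is correct and takes essentially the same route as the paper, which obtains the theorem directly from the preceding parallel-class bookkeeping for $\cT(\cS)$ and $\cT'(\cS)$ followed by an appeal to Construction~\ref{const:fromLS} and Theorem~\ref{thm:fromLS}. Your explicit observation that the base-versus-replace choice must be made uniformly across each parallel class of $\cT$ (so that the glued $B_3$-derived objects really are parallel classes of $\Z_n \times \Z_{k-1}$) is a useful sharpening of the paper's slightly ambiguous per-block phrasing, and your count $r = 1 + (k-1)\bigl(\tfrac{n-1}{k-1}-i\bigr) = n-(k-1)i$ matches the stated alphabet $\Z_{n+1-(k-1)i}$.
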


Note, that when $i= \frac{n-1}{k-1}$ the mixed Steiner system is a Steiner system S$(2,k,(k-1)n+1)$.

\begin{example}
For $k=4$ the following is a $9 \times 4$ orthogonal array OA$(2,4,3)$ with its blocks for the replace system.
$$
\cD=
\begin{array}{cccc}
0 & 0 & 0 & 0 \\
0 & 1 & 1 & 1 \\
0 & 2 & 2 & 2 \\
1 & 0 & 1 & 2 \\
1 & 1 & 2 & 0 \\
1 & 2 & 0 & 1 \\
2 & 0 & 2 & 1 \\
2 & 1 & 0 & 2 \\
2 & 2 & 1 & 0 \\
\end{array}~~,
\begin{array}{cccc}
100 & 100 & 100 & 100 \\
100 & 010 & 010 & 010 \\
100 & 001 & 001 & 001 \\
010 & 100 & 010 & 001 \\
010 & 010 & 001 & 100 \\
010 & 001 & 100 & 010 \\
001 & 100 & 001 & 010 \\
001 & 010 & 100 & 001 \\
001 & 001 & 010 & 100 \\
\end{array}~.
$$
\hfill\quad $\blacksquare$
\end{example}

Finally, we note that Theorem~\ref{thm:manyparam} depends on the existence of a two power primes $k$ and $k-1$.
Such power primes exist only when
$k-1$ is a Mersenne prime, or $k$ is a Fermat prime, or for $k=9$.


\section*{Acknowledgement}

Tuvi Etzion thanks
Lijun Ji~\cite{Ji25} that have noted that some results in a related draft were already known under different framework.
The references he provided led to some important parts of the current draft.


\begin{thebibliography}{99}

\bibitem{Etz96}
    {\sc T. Etzion,}
    {\sl On threshold schemes from large sets,}
    {\em J. Combinatorial Designs,} 4 (1996), 323 -- 338.

\bibitem{Etz97}
    {\sc T. Etzion,}
    {\sl Optimal constant weight codes over $Z_k$ and generalized designs,}
    {\em Discrete \ Mathematics,} 169 (1997), 55 -- 82.

\bibitem{Etz98}
    {\sc T. Etzion,}
    {\sl Perfect byte-correcting codes,}
    {\em IEEE Trans. on Infor. Theory,} 44 (1998), 3140 -- 3146.

\bibitem{Etz22}
     {\sc T. Etzion,}
     {\em Perfect Codes and Related Structures},
     {\sl World Scientific, 2022}.

\bibitem{Etz25}
    {\sc T. Etzion,}
    {\sl Steiner systems over mixed alphabet and related designs,}
    https://arxiv.org/ abs/2506.23860 (2025).

\bibitem{Etz25a}
    {\sc T. Etzion,}
    {\sl Mixed Steiner triple systems with shortest length,}
    https://arxiv.org/abs/ 2508.12954 (2025).


\bibitem{EtZh21}
   {\sc T. Etzion and J. Zhou,}
   {\sl Large sets with multiplicity,}
   {\em Designs, codes and Crypto.,} 89 (2021) 1661--1690.

\bibitem{Han63}
   {\sc H. Hanani,}
   {\sl On some tactical configurations,}
   {\em Canad. J. Math.,} 15 (1963) 702--722.

\bibitem{Han75}
   {\sc H. Hanani,}
   {\sl Balanced incomplete block designs and related designs,}
   {\em Discrete Math.,} 11 (1975) 255--369.

\bibitem{HRW72}
   {\sc H. Hanani, D.K. Ray-Chaudhuri, R.M. Wilson,}
   {\sl On resolvable designs,}
   {\em Discrete Math.,} 3 (1972) 343--357.

\bibitem{HSS99}
     {\sc A. S. Hedayat, N. J. A. Sloane, and J. Stufken,}
     {\em Orthogonal Arrays -- Theory and Applications},
     {\sl Springer, 1999}.

\bibitem{Ji25}
    {\sc L. Ji,}
    {\sl Personal communication.}

\bibitem{LCZ19}
    {\sc X. Li, Y. Chang, and J. Zhou,}
    {\sl Group divisible 3-designs with block size four and group size $1^ns^1$,}
    {\em J. Combinatorial Designs,} 27 (2019), 688 -- 700.

\bibitem{LKRS}
    {\sc K. A. Lauinger, D. L. Kreher, R. Rees, and D. R. Stinson,}
    {\sl Computing transverse $t$-designs,}
    {\em J. of Combinatorial Mathematics and Combinatorial Computing,} 54 (2005), 33 -- 56.

\bibitem{Mil74}
   {\sc W. H. Mills,}.
   {\sl On the covering of triples by quadruples,}
   {\em Proc. of the Fifth Southeastern Conference on Combinatorics Graph Theory and Algorithms,} pp. 573 -- 581, 1974.


\end{thebibliography}
\end{document}